\newtheorem*{rep@theorem}{\rep@title}
\newcommand{\newreptheorem}[2]{%
\newenvironment{rep#1}[1]{%
 \def\rep@title{#2 \ref{##1}}%
 \begin{rep@theorem}}%
 {\end{rep@theorem}}}
\theoremstyle{plain}
\newtheorem{theorem}{Theorem}[section]
\newtheorem{lemma}[theorem]{Lemma}
\newtheorem{proposition}[theorem]{Proposition}
\newtheorem{cor}[theorem]{Corollary}
\theoremstyle{definition}
\newtheorem{definition}[theorem]{Definition}
\newtheorem{example}[theorem]{Example}
\newtheorem{qu}{Question}
\theoremstyle{remark}
\newtheorem{rem}[theorem]{Remark}
\DeclareMathOperator{\aut}{Aut}
\DeclareMathOperator{\out}{Out}
\def\outn{{\rm{Out}}(F_n)}
\def\auto{{\rm{Aut^0}}(A_\G)}
\def\saut{{\rm{PTAut^0}}(A_\G)}
\def\outo{{\rm{Out}}^0(A_\G)}
\def\sout{{\rm{PTOut}}^0(A_\G)}
\def\autg{{\rm{Aut}}(A_\G)}
\def\outg{{\rm{Out}}(A_\G)}
\def\torg{\mathcal{T}(A_\G)}
\def\toro{\overline{\mathcal{T}}(A_\G)}
\def\symg{{\rm{Sym}}(A_\G)}
\DeclareMathOperator{\gl}{GL}
\def\gln{{\rm{GL}}_n(\mathbb{Z})}
\def\slm{{\rm{SL}}_m(\mathbb{Z})}
\def\sln{{\rm{SL}}_n(\mathbb{Z})}
\DeclareMathOperator{\SL}{SL}
\DeclareMathOperator{\im}{im}
\def\r{\rho}
\def\G{\Gamma}
\def\g{\gamma}
\def\Z{\mathbb{Z}}
\def\ssm{\smallsetminus}
\def\env{U}
\def\enva{U^{\infty}}
\def\unia{U^*}
\def\A{{A_\G}}
\def\hom{{\rm{Hom}}}
\def\co{\colon\thinspace}
\def\rrank{{\rm{rank}}_\mathbb{R}}
\def\dsl{d_{SL}}
\def\ad{{\rm{ad}}}
\title[Johnson homomorphisms and actions of lattices on RAAGs]{Johnson homomorphisms and actions of higher-rank lattices on right-angled Artin groups}
\date{12 June 2013}
\author{Richard D. Wade}   
\begin{document}
\begin{abstract}
Let $G$ be a real semisimple Lie group with no compact factors and finite centre, and let $\Lambda$ be an irreducible lattice in $G$. Suppose that there exists a homomorphism from $\Lambda$ to the outer automorphism group of a right-angled Artin group $A_\G$ with infinite image. We give a strict upper bound to the real rank of $G$ that is determined by the structure of cliques in $\G$. An essential tool is the Andreadakis--Johnson filtration of the Torelli subgroup $\torg$ of $\autg$. We answer a question of Day relating to the abelianisation of $\torg$, and show that $\torg$ and its image in $\outg$ are residually torsion-free nilpotent. \end{abstract}

\address{Department of Mathematics, University of Utah, Salt Lake City, UT 84112}
\email{wade@math.utah.edu}
\subjclass[2010]{20E36, 20F36 (22E40, 20F14)}

\maketitle

\section{Introduction} 
Right-angled Artin groups have delivered considerable applications to geometric group theory (two examples are the association with special cube complexes  \cite{HW} and Bestvina--Brady groups \cite{BB}). Another recent area of progress has been the extension of rigidity properties held by irreducible lattices in semisimple Lie groups to involve mapping class groups and automorphism groups of free groups \cite{FM,bridson-farb,BW2010}. In this paper we look to what extent this phenomenon extends to the automorphism group of a right-angled Artin group $A_\G$, where $\G$ is the defining graph. If $\G$ is discrete then $A_\G$ is the free group $F_n$, and if $\G$ is complete then $A_\G$ is the free abelian group $\mathbb{Z}^n$. One therefore expects traits shared by both $\mathbb{Z}^n$ and $F_n$ to be shared by an arbitrary RAAG. Similarly, one optimistically hopes that properties shared by both $\gln$ and $\outn$ will also by shared by $\outg$ for an arbitrary right-angled Artin group. For instance, there is a 
Nielsen-type generating set of $\outg$ given by the work of Laurence \cite{MR1356145} and Servatius \cite{MR1023285}, and $\outg$ has finite virtual cohomological dimension \cite{CV2009}.  $\outg$ is residually finite  \cite{CV2010, Minasyan2009}, and for a large class of graphs, $\outg$ satisfies the Tits alternative \cite{CV2010}.

Bridson and the author recently showed that any homomorphism from an irreducible lattice $\Lambda$ in a higher-rank semisimple Lie group to $\outn$ has finite image \cite{BW2010}. A direct translation of this result cannot hold for an arbitrary RAAG, as $\mathbb{Z}^n$ is a RAAG and $\out(\mathbb{Z}^n)=\gln$. However, Margulis' superrigidity implies that if $\Lambda \to \gln$ is a map with infinite image, then the real rank of the Lie group containing $\Lambda$ must be less than or equal to $n-1$, the real rank of $\SL_n(\mathbb{R})$. The aim of this paper is to show that we may effectively bound the rank of an irreducible lattice acting on $A_\G$, and that this bound is determined by obvious copies of $\slm$ in $\outg$. 
We will describe these copies now. Suppose that $\G'$ is a subgraph of $\G$ which is a clique (i.e. any two vertices of $\G'$ are connected by an edge).  A free abelian subgroup of rank $m$ in $A_\G$ does not imply there exists a copy of $\slm$ in $\outg$, however if every vertex of $\G'$ has the same star  then the natural embedding  $\mathbb{Z}^{|V(\G')|} \to A_\G$ induces an injection $\SL_{|V(\G')|}(\mathbb{Z})\to \outg$. Define the $\SL$--dimension of $\G$, written $d_{SL}(\G)$, to be the number of vertices in the largest such subgraph $\G'$. 

\begin{reptheorem}{t:lr}
Let $G$ be a real semisimple Lie group with finite centre, no compact factors, and $\rrank G \geq 2$. Let $\Lambda$ be an irreducible lattice in $G$. If $\rrank G \geq d_{SL}(\G)$, then every homomorphism $f:\Lambda \to \outg$ has finite image.
\end{reptheorem}

A motivating example is the irreducible lattice $\SL_n(\mathbb{Z})$ in the real semisimple Lie group $\SL_n(\mathbb{R})$ of real rank $\rrank \SL_n(\mathbb{R}) = n-1$. The theory of discrete subgroups of Lie groups is mostly used in a black-box fashion in this paper, so if one wishes this paper can be read with only this example in mind. Witte-Morris \cite{W-M} has written a geometrically flavoured introduction to lattices where the reader may find definitions for the terms in Theorem~\ref{t:lr}. 

Our previous observation that $\outg$ contains a copy of $SL_m(\mathbb{Z})$ for $m=\dsl(\G)$ tells us that the bound on $\rrank G$ given in Theorem \ref{t:lr} is the best that one can provide. The above theorem is deduced from the previously mentioned results of Bridson and the author for $\outn$ and of Margulis for $\sln$, combined with the following general algebraic criteria:

\begin{repthm}{t:main}
Suppose that $\dsl (\G) = m$. Let $$F(\G')=\max\{|V(\G')|:\G' \subset \G \text{ and $A_{\G'}\leq A_\G$ is a free group}\}.$$ Let $\Lambda$ be a group. Suppose that for each finite index subgroup $\Lambda'\leq\Lambda$, we have: \begin{itemize}
                                                                                                                                           \item Every homomorphism $\Lambda' \to \slm$ has finite image, 
\item For all $N \leq F(\G)$, every homomorphism $\Lambda' \to \out(F_N)$ has finite image.
                                                                                                                                          \end{itemize}
Then every homomorphism $f:\Lambda \to \outg$ has finite image.
\end{repthm}

The more general version of Theorem~\ref{t:main} in the body of the paper concerns homomorphisms to subgroups of $\outg$ generated by subsets of a natural generating set.

In the free group case, the rigidity result for lattices was deduced from a more general result that showed that $\mathbb{Z}$--averse groups, that is, groups for which no finite index subgroup contains a normal subgroup mapping onto $\mathbb{Z}$, have no interesting maps to $\outn$.  To do this, one requires some deep geometric results on the behaviour of \emph{fully irreducible} elements of $\outn$ \cite{HM,DGO,BF}, and algebraic results about the structure of the Torelli subgroup of $\outn$ (the subgroup consisting of automorphisms that act trivially on $H_1(F_n)$).  The results for $\outg$ in \cite{CV2009,CV2010} come from looking at \emph{projection homomorphisms} which, when $\G$ is connected, allow us to understand $\outg$ in terms of automorphisms of smaller RAAGs. This allows for inductive arguments. Our approach is to combine the projection homomorphisms alluded to above with algebraic results concerning the structure of the Torelli subgroup of $\outg$.  After the background material in the 
following section, the paper is organised like so: Section~\ref{s:lie theory} uses Lie methods pioneered by Magnus \cite{MKS} to study the lower central series of $A_\G$; in particular we study the consecutive quotients $L_c=\g_c(A_\G)/\g_{c+1}(A_\G)$, and the Lie $\mathbb{Z}$--algebra $L$ formed by taking the direct sum $L=\oplus_{c=1}^{\infty}L_c$, where the Lie bracket is induced by taking commutators in $A_\G$. Let $Z(\_)$ denote the centre of a group or Lie algebra, and $pL$ be the ideal of $L$ consisting of elements which are $p$th multiples in $L$. The main technical result concerning $L$ is:

\begin{reptheorem}{c:centres}
If $Z(A_\G)=\{1\}$ then $Z(L)=Z(L/pL)=0$ and $Z(L/(\oplus_{i=c+1}^\infty L_i))=L_c/(\oplus_{i=c+1}^\infty L_i)$.
\end{reptheorem}

We obtain this by looking at the enveloping algebra of $L$, which we call $U$. This has a particularly nice description as a free $\mathbb{Z}$--module with a basis consisting of positive elements of $A_\G$.  This allows us to reduce questions about commutation in $L$ to questions about commutation in $A_\G$.

Given a group $G$, let $\mathcal{T}(G)$ be the \emph{Torelli subgroup} of $\aut(G)$ -- the subgroup of $\aut(G)$ consisting of automorphisms that act trivially on $H_1(G)$ (the name here comes from the analogous situation in mapping class groups). In Section \ref{s:johnson} we describe a central filtration $\torg=G_1 \trianglerighteq G_2 \trianglerighteq G_3 \ldots$ of $\torg$ analogous to the Andreadakis--Johnson filtration of $\mathcal{T}(F_n)$. We have Johnson homomorphisms $$\tau_c\co G_c \to \hom(L_1,L_{c+1})$$ for which $\ker\tau_c=G_{c+1}$. Day \cite{Day09} has shown that $\torg$ is finitely generated by a set $\mathcal{M}_{\G}$, which we describe in Section~\ref{s:torelli intro}. We show that $\tau_1$ maps the elements of $\mathcal{M}_\G$ to linearly independent elements in $\hom(L_1,L_2)$, therefore $H_1(\torg)$ is a free abelian group with a basis given by the image of $\mathcal{M}_\G$. This answers Question~5.4 of \cite{Day09}. In particular, $\mathcal{M}_\G$ is a minimal generating set of $\torg$.
 The above filtration is separating, so that $\cap_{c=1}^{\infty}G_c=\{1\}$, and the groups $L_c$ are free abelian, so each consecutive quotient $G_c/G_{c+1}$ is free abelian. 

Let $\overline{\mathcal{T}}(A_\G)$ be the image of $\mathcal{T}(A_\G)$ in $\out(A_\G)$. In the final part of Section \ref{s:johnson} we  combine Theorem \ref{c:centres} with results of Minasyan \cite{Minasyan2009} and Toinet \cite{Toinet10} to show that the image $H_1,H_2,H_3,\ldots$ of the series $G_1,G_2,G_3, \ldots$ in $\outg$ is separating and that each consecutive quotient $H_c/H_{c+1}$ is also free abelian. (This roughly follows methods designed by Bass and Lubotzky \cite{BL} for studying central series.) In particular, we attain the following result:

\begin{reptheorem}{t:tfn}
For any graph $\G$, the group $\overline{\mathcal{T}}(A_\G)$ is residually torsion-free nilpotent.
\end{reptheorem}

This was also discovered independently by Toinet \cite{Toinet10}. This is a key part in the proof of Theorem~\ref{t:main}. In particular we use it to deal with the inductive step when the underlying graph $\G$ is disconnected. 

A problem arises that projection homomorphisms are not generally surjective; consequently they may raise $\SL$--dimension. This is confronted in Section \ref{s:sldim}, where we study the image of $\outg$ under a projection map. As such subgroups appear naturally when working with projections, we feel that the methods here may be of independent interest. In particular, we define a notion of $\SL$--dimension for an arbitrary subgroup $G \leq \outg$, and show that if we only study the image of $\outg$ under a projection map, $\SL$--dimension does not increase. This prepares us to complete the proof of Theorem \ref{t:main}. We describe applications in Section \ref{s:consequences}.

The author would like to thank his supervisor Martin Bridson for his encouragement and advice, the referee for feedback which has considerably improved this paper, and Andrew Sale for a series of enthusiastic and helpful conversations.

\section{Background} \label{s:background1}

Most of this section is standard, however there are some ideas here that are new. In Section \ref{s:generators} we extend the usual generating set of $\outg$ to include \emph{extended partial conjugations}, which are products of partial conjugations that conjugate by the same element of $A_\G$. We also order the vertices of $\G$ in a useful way in Sections \ref{s:relation} and \ref{s:gordering}, which will be used later in the paper to give a block decomposition of the action of subgroups of $\outg$ on $H_1(A_\G)$. 

We should first define $A_\G$. Let $\G$ be a graph with vertex and edge sets $E(\G)$ and $V(\G)$ respectively.  If $\iota, \tau:E(\G) \to V(\G)$ are the maps that take an edge to its initial and terminal vertices, then $A_\G$ has the presentation:$$ A_\G= \langle v:v \in V(\G)|[\iota(e),\tau(e)]:e \in E(\G) \rangle.   $$ Our commutator convention is $[g,h]=ghg^{-1}h^{-1}$. Throughout we assume that $\G$ has $n$ vertices labelled $\{v_1,\ldots,v_n\}$. Given a vertex $v$ in $\G$, the \emph{link of} $v$, or $lk(v)$, is the set of vertices of $\G$ adjacent to $v$. The \emph{star} of $v$, or $st(v)$, is defined by $st(v)=lk(v) \cup \{v\}$. 

 \subsection{Words in $A_\G$.}\label{s:words}
At times we will need to discuss words and word length in $A_\G$. The support, $supp(w)$, of a word $w$ on $V(\G)\cup V(\G)^{-1}$ is the full subgraph of $\G$ spanned by the generators (or their inverses) occurring in $w$. We say that a word $w$ representing $g \in A_\G$ is \emph{reduced} if there is no sub-word of the form $v^{\pm1} w'v^{\mp1}$ with $supp(w')\subset st(v)$. We may pass between two reduced words by repeatedly switching consecutive letters that commute (see \cite{MR2322545}) -- it follows that we may define $supp(g)$ to be the support of any reduced word representing $g$. Similarly the \emph{length} of $g$ is the length of any reduced word representing $g$. 

\subsection{A generating set of $\autg$} \label{s:generators} Laurence \cite{MR1356145} proved a conjecture of Servatius \cite{MR1023285} that $\autg$ has a finite generating set consisting of the following automorphisms:

\begin{description} 
\item[Graph symmetries]
If a permutation of the vertices comes from an isomorphism of the graph, then this permutation induces an automorphism of $A_\G$. These automorphisms form a finite subgroup of $\autg$ called $\symg$.
\item[Inversions] These are automorphisms that come from inverting one of the generators of $A_\G$, so that:
$$s_i(v_k)=\begin{cases}  v_i^{-1}  & i=k \\
v_k  & i \neq k. \end{cases}$$
\item[Partial conjugations]
Suppose $[v_i,v_j] \neq 0$. Let $\G_{ij}$ be the connected component of $\G - st(v_j)$ containing $v_i.$ Then the partial conjugation $K_{ij}$ conjugates every vertex of $\G_{ij}$ by $v_j$, and fixes the remaining vertices, so that:
$$K_{ij}(v_k)=\begin{cases} v_jv_kv_j^{-1} & v_k \in \G_{ij} \\
       v_k & v_k \not \in \G_{ij}. \end{cases} $$
Note that if $lk(v_i) \subset st(v_j)$ then $\G_{ij}=\{v_i\}$, so in this case $K_{ij}$ fixes every basis element except $v_i$.
\item[Transvections]
If $lk(v_i) \subset st(v_j)$, then there is an automorphism $\r_{ij}$ which acts on the generators of $A_\G$ as follows:
$$\r_{ij}(v_k)=\begin{cases} v_iv_j  & i=k \\
v_k  & i \neq k. \end{cases} $$
\end{description}

There are two important finite index normal subgroups of $\autg$ that we obtain from this classification. The first is the subgroup generated by inversions, partial conjugations, and transvections and is denoted $\auto$. The second is the smaller subgroup generated by only partial conjugations and transvections. Denote this group $\saut$. In some cases we will need to look at groups generated by (outer) automorphisms that conjugate more than one component of $\G - st(v_j)$ by $v_j$. Let $T$ be a subset of $\G - st(v_j)$ such that no two vertices of $T$ lie in the same connected component of $\G - st(v_j)$. We define an \emph{extended partial conjugation} to be an automorphism of the form $\prod_{t \in T}K_{tj}$.  We will abuse notation by describing the images of the above elements in $\outg$ by the same names, so that the groups $\outo$ and $\sout$ are defined in the same manner. If $\phi \in \autg$, we use $[\phi]$ to denote the equivalence class of $\phi$ in $\outg$. 

\begin{definition}Let $\mathcal{S}_\G$ be the enlarged generating set of $\outg$ given by graph symmetries, inversions, extended partial conjugations, and transvections. \end{definition}

We shall be studying subgroups of $\outg$ generated by subsets of $\mathcal{S}_{\G}$, however some of these groups are not generated by subsets of the standard generating set. This is because under the restriction, exclusion and projection maps defined in Section \ref{s:rep}, partial conjugations are not always mapped to partial conjugations, but are always mapped to extended partial conjugations. Throughout this paper, we will assume $\autg$ and $\outg$ act on $A_\G$ on the left.

\subsection{Ordering the vertices of $\G$} \label{s:relation}

Extending the definition of the link and star of a vertex, given any full subgraph $\G'$ of $\G$, the subgraph $lk(\G')$ is defined to be the intersection of the links of the vertices of $\G'$, and we define $st(\G')=\G' \cup lk(\G').$  Given any full subgraph $\G'$, the right-angled Artin group $A_{\G'}$ injects into $A_\G,$ so can be viewed as a subgroup. We shall only consider full subgraphs of $\G$, so will sometimes blur the distinction between a subset of the vertices of $\G$ and the full subgraph spanned by these vertices.

\begin{definition}[Standard ordering]
The \emph{standard order} on $V(\G)$ is the binary relation on $V(\G)$ defined by $u \leq v$ if $lk(u) \subset st(v).$ \end{definition} This ordering was first introduced in \cite{KMNR}, where it was shown that this relation is transitive as well as reflexive, so defines a preorder on the vertices. This induces an equivalence relation by letting $u \sim v$ if $u \leq v$ and $v \leq u$. Let $[v]$ denote the equivalence class of the vertex $v.$  We will abuse notation by also using $[v]$ to denote the full subgraph of $\G$ spanned by the vertices in this equivalence class. The preorder descends to a partial order of the equivalence classes. We say that $[v]$ is \emph{maximal} if it is maximal with respect to this ordering. The vertices of $[v]$ are all either at distance one from each other in $\G$, or generate a free subgroup of $A_\G$, therefore $A_{[v]}$ is either a free abelian, or a free non-abelian subgroup of $A_\G$ (\cite{CV2009}, Lemma 2.3). We say that the equivalence class $[v]$ is \emph{abelian} or 
\emph{non-abelian} respectively. Suppose that there are $r$ equivalence classes of vertices in $\G$. We may choose an enumeration of the vertices so that there exists $1=m_1<m_2<\ldots<m_r < n$ such that the equivalence classes are the sets $\{v_1=v_{m_1},\ldots,v_{m_2-1}\},\ldots,\{v_{m_r},\ldots,v_n\}.$ With further rearrangement we may assume that $v_{m_i} \leq v_{m_j}$ only if $i \leq j$. We formally define $m_{r+1}=n+1$ so that for all $i$, the equivalence class of $[v_{m_i}]$ contains $m_{i+1}-m_i$ vertices. 

\subsection{G--ordering vertices}\label{s:gordering}

Let $G$ be a subgroup of  $\outg$. \begin{definition}[$G$--order] The \emph{$G$--order} on $V(\G)$ is the binary relation on $V(\G)$ defined by $v_i \leq_G v_j$ if either $i=j$ or $[\rho_{ij}] \in G$. \end{definition}

As $\r_{ij}$ is defined if and only if $lk(v_i) \subset st(v_j)$, when $G=\outg$ the $G$--order is simply the standard order defined above. In general, the $G$--order is a subset of the standard order defined on the vertices. Furthermore,  $\leq_G$ is reflexive by definition and transitive as $\r_{il}=\r_{jl}^{-1}\r_{ij}^{-1}\r_{jl}\r_{ij}$. Hence $\leq_G$ is a preorder on $V(\G)$. As with the standard order, $\leq_G$ induces an equivalence relation $\sim_G$ on the vertices, and induces a partial ordering of the equivalence classes of $\sim_G$. Let $[v_i]_G$ be the equivalence class of the vertex $v_i$. As $\leq_G$ is a subset of $\leq$, each equivalence class $[v_i]_G$ is a subset of the equivalence class $[v_i]$. In particular the subgroup $A_{[v_i]_G}$ is either free abelian or free and non-abelian, so $[v_i]_G$ may also be described as $\emph{abelian}$ or $\emph{non-abelian}$.  Suppose that there are $r' \geq r$ equivalence classes of vertices in $\sim_G$. We may further refine the enumeration of the 
vertices given previously so that there exists $1=l_1 < l_2 < \ldots < l_{r'} < n$ such that the equivalence classes of $\sim_G$ are the sets $\{v_1=v_{l_1},\ldots,v_{l_2-1}\},\ldots,\{v_{l_{r'}},\ldots,v_n\},$ and $v_{l_i} \leq_G v_{l_j}$ only if $i \leq j$. Define $l_{r'+1}=n+1$ so that for all $i$, the equivalence class of $[v_{l_i}]_G$ contains $l_{i+1}-l_{i}$ vertices.

\subsection{The Torelli subgroups of $\autg$ and $\outg$}\label{s:torelli intro}

The abelianisation of $A_{\G}$, given by $H_1(A_\G)=A_\G/[A_\G,A_\G]$, is a free abelian group generated by the images of the vertices under the abelianisation map $A_\G \to A_\G/[A_\G,A_\G].$ This induces a natural map $$\Phi:\autg \to \aut(H_1(A_\G)).$$ Once and for all we fix the basis of $H_1(A_\G)$ to be the image of $V(\G)$ under the abelianisation map. This allows us to identify $\aut(H_1(A_\G))$ with $\gln$. This is the viewpoint which we will take for the rest of the paper. We say that $\ker\Phi=\torg$ is the \emph{Torelli subgroup of} $\autg$. Every partial conjugation lies in $\torg$. If $v_i,v_j$ and $v_k$ are distinct vertices, with $lk(v_i) \subset st(v_j)\cap st(v_k)$ and $[v_j,v_k]\neq0,$ then the mapping $$K_{ijk}(v_l)=\begin{cases} v_i[v_j,v_k] & l=i \\
v_l & l \neq i, \end{cases} $$ defines a nontrivial element of $\torg$. 
       
\begin{definition}\label{d:mg}
Let $\mathcal{M}_\G$ be the subset of $\autg$ consisting of: \begin{enumerate} \item Partial conjugations. \item Elements of the form $K_{ijk}$, where $[v_j,v_k]\neq1$, $lk(v_i) \subset st(v_j)\cap st(v_k)$, and $j<k$. \end{enumerate} \end{definition}

We add the restriction that $j <k$ in the above definition as  $K_{ijk}=K_{ikj}^{-1}$. In \cite{Day09}, Day proves the following theorem:\begin{theorem}[\cite{Day09}, Theorem B] $\mathcal{M}_\G$ is a generating set of $\torg$. \end{theorem}
       
Let $\text{Inn}(A_\G)$ denote the set of inner automorphisms of $A_\G$. As $\Phi$ sends every element of $\text{Inn}(A_\G)$ to the identity, we can factor out $\text{Inn}(A_\G)$ to obtain a map $$\overline{\Phi}:\outg \to \gln.$$
We define $\toro=\ker\overline{\Phi},$ and call this group the \emph{Torelli subgroup of} $\outg$.

\subsection{Restriction, exclusion, and projection homomorphisms.} \label{s:rep}

Suppose that $\G'$ is a full subgraph of $\G$, so that $A_{\G'}$ can be viewed as a subgroup of $A_\G$ in the natural way. We say that \emph{the conjugacy class of $A_{\G'}$ is preserved by} $G < \out(A_\G)$ if for every element $[\phi] \in G$ there exists a representative $\psi \in [\phi]$ such that $\psi(A_{\G'})=A_{\G'}$.

\begin{proposition}If $G$ preserves the conjugacy class of $A_{\G'}$ then there is a homomorphism $R_{\G'}: G \to \out(A_{\G'})$. \end{proposition}

\begin{proof}
Let $[\phi] \in G$. Let $\psi$ and $\psi'$ be two representatives of $[\phi]$ such that $\psi|_{A_{\G'}}$ and $\psi'|_{A_{\G'}}$ are automorphisms of $A_{\G'}$. Then $\psi$ and $\psi'$ differ by an inner automorphism $\ad_g$ with $gA_{\G'}g^{-1}=A_{\G'}$.  By Proposition 2.2 of  \cite{MR2372847} there exists $g_1 \in A_{\G'}$ and $g_2$ in the centraliser of $A_{\G'}$ such that $g=g_1g_2$. It follows that $\psi|_{A_{\G'}}$ and $\psi|_{A_{\G'}}$ differ by $\ad_{g_1}$ and represent the same elements of $\out(A_{\G'})$. Hence $[\phi] \mapsto [\psi|_{A_{\G'}}]$ gives the required homomorphism.\end{proof}

We say that $R_{\G'}$ is a \emph{restriction map}. Rather than asking that $G$ preserves the conjugacy class of $A_{\G'}$ we may impose the weaker requirement that the normal closure $\langle \langle A_{\G'}\rangle \rangle$ of $A_{\G'}$ is fixed setwise by a (equivalently, any) representative of each element of $G$. In this case, we say that $G$ \emph{preserves the normal closure of} $A_{\G'}$, and one can show the following: 

\begin{proposition}
Suppose that the normal closure of $A_{\G'}$ is preserved by a subgroup $G< \outg$. Then there is a homomorphism $$E_{\G'}\co G \to \out(A_\G/\langle \langle A_{\G'}\rangle \rangle)\cong\out(A_{\G-\G'}),$$ which we call the exclusion map.  \end{proposition}

\begin{example}\label{e:maps}
If $\G$ is connected and $v$ is a maximal vertex with respect to the standard ordering then the conjugacy classes of $A_{[v]}$  and $A_{st[v]}$ are preserved by $\outo$ (\cite{CV2009}, Proposition 3.2). Therefore there is a restriction map $$R_v:\outo \to \out^0(A_{st[v]}),$$ an exclusion map $$E_v:\outo \to \out^0(A_{\G - [v]}),$$ and a projection map $$P_v:\outo \to \out^0(A_{lk[v]})$$ obtained by combining the restriction and exclusion maps. We can take the direct sum of these projection maps over all maximal equivalence classes $[v]$ to obtain the \emph{amalgamated projection homomorphism}: $$P:\outo \to \bigoplus_{\text{$[v]$ maximal}}\out^0(A_{lk[v]})$$  
\end{example}

\begin{example}\label{e:disconnected} If $\G$ is not connected, then there exists a finite set $\{\G_i\}_{i=1}^k$ of connected graphs containing at least two vertices and an integer $N$ such that $A_\G \cong F_N \ast_{i=1}^k A_{\G_i} $. Each generator of $\outo$ sends $A_{\G_i}$ to a conjugate of itself, therefore the conjugacy class of $A_{\G_i}$ is preserved by $\outo$, and for each $i$ we obtain a restriction map
$$R_i:\outo \to \out^0(A_{\G_i}).$$
Furthermore, the normal closure of $\ast_{i \in I}A_{\G_i}$ is preserved by $\outg$ and there is an exclusion map $$E: \outg \to \out(F_N).$$
\end{example}

Charney and Vogtmann have shown that when $\G$ is connected the maps in Example \ref{e:maps} describe $\outg$ almost completely. There are two cases: when the centre of $A_\G$, which we write as $Z(A_\G)$, is trivial, and when $Z(A_\G)$ is nontrivial. In the first case, they show the following:

\begin{theorem}[\cite{CV2009}, Theorem 4.2] \label{t:projections}
If $\G$ is connected and $Z(A_\G)$ is trivial, then $\ker P$ is a finitely generated free abelian group.
\end{theorem}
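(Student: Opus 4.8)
The plan is to compute $\ker P$ directly in terms of the Nielsen-type generators of $\outo$. Since $\outo$ is generated by inversions, partial conjugations, and transvections, and since the restriction and exclusion maps of Example~\ref{e:maps} carry these generators to generators of the target (inversions to inversions, transvections to transvections, and partial conjugations to extended partial conjugations), I would first record where each generator is sent by each projection $P_v$. The key simplification is that for a maximal class $[v]$ every vertex of $lk[v]$ is adjacent to every vertex of $[v]$, so $A_{st[v]}=A_{[v]}\times A_{lk[v]}$ is a direct product; the projection $P_v$ is then literally ``restrict to this product and forget the $A_{[v]}$ factor''. With this description the image of each generator under $P_v$ can be read off from whether its support meets $[v]$, meets $lk[v]$, or avoids $st[v]$ altogether.

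Second, I would isolate a candidate generating set for the kernel. The generators that die under every $P_v$ are, roughly, the transvections $\r_{ij}$ and the (extended) partial conjugations whose nontrivial action is concentrated in the maximal classes and is therefore erased once the $A_{[v]}$ factors are forgotten; I would determine the precise support conditions that characterise them, checking in each case that the element fixes each $A_{st[v]}$ in a way that becomes inner after projecting out $A_{[v]}$. The converse inclusion is where I expect the real difficulty to lie: one must show that $\outo$ modulo the subgroup generated by these elements embeds into $\bigoplus_{[v]\text{ maximal}}\out^0(A_{lk[v]})$. I would attack this by induction on the number of maximal classes, peeling off one projection $P_v$ at a time and forcing an automorphism that is trivial on every link, generator by generator, to be a product of the listed elements.

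Third, I would show the resulting group is abelian. The listed generators satisfy Steinberg-type commutator relations, and the decisive point is that the ``targets'' involved are maximal: the chaining relation $[\r_{ij},\r_{jk}]=\r_{ik}$ requires the shared index $j$ to be simultaneously a target of one transvection and a source of another, but a maximal vertex $v_j$ cannot satisfy $v_j\leq v_k$ with $[v_k]\neq[v_j]$, so the chaining (and likewise the analogous interaction with partial conjugations) never occurs within the kernel. A finite case analysis over the admissible supports then shows every pair of generators commutes, so $\ker P$ is abelian; being generated by finitely many elements it is finitely generated.

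Finally, freeness reduces to torsion-freeness of a finitely generated abelian group, which I would establish by splitting along $\overline{\Phi}$. The transvection generators map to nontrivial unipotent integer matrices, and any subgroup of such matrices is torsion-free; the partial-conjugation generators lie in $\toro$, which is residually torsion-free nilpotent by Theorem~\ref{t:tfn} and hence torsion-free. Thus $\ker P$ sits in a short exact sequence of abelian groups whose ends, the image $\overline{\Phi}(\ker P)$ and the kernel $\ker P\cap\toro$, are both torsion-free, and an extension of a torsion-free abelian group by a torsion-free abelian group is torsion-free. The main obstacle remains the second step: pinning down that the obvious kernel elements are the only ones requires controlling the interaction between distinct projections $P_v$ and ruling out unexpected cancellations, whereas the commutation and torsion-freeness arguments are essentially bookkeeping once the generating set is known.
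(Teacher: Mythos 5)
This statement is not proved in the paper at all: it is quoted verbatim from Charney--Vogtmann (\cite{CV2009}, Theorem 4.2), and the author explicitly declines even to use the explicit generating set for $\ker P$ given later in \cite{CV2010}. So there is no internal proof to compare your proposal against; it has to be judged against the source.

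Judged on its own terms, your proposal has a genuine gap, and it sits exactly where you suspect: step two. Identifying the standard generators of $\outo$ that die under every $P_v$ is routine (and your observation that $A_{st[v]}=A_{[v]}\times A_{lk[v]}$ for a maximal class $[v]$ is correct and is the right starting point), but the claim that these elements generate $\ker P$ is the entire content of the theorem, and it is not a formality. In general the kernel of a homomorphism is not generated by the subset of a generating set that happens to lie in the kernel, so the ``converse inclusion'' cannot be settled ``generator by generator'': an element of $\ker P$ is a priori an arbitrary word in transvections and partial conjugations whose individual images under the various $P_v$ are nontrivial but cancel. Your proposed induction on the number of maximal classes does not say how to control the interaction between distinct projections $P_v$ and $P_w$ (their stars overlap, and an automorphism trivial on $lk[v]$ can still act in a complicated way on $lk[w]$); this is precisely where Charney and Vogtmann do real work, via a careful normal form/peak-reduction style analysis of which products of generators can act trivially on every link. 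Until that step is supplied, steps three and four are vacuous, since both the commutation case analysis and the unipotence argument presuppose that you already hold the explicit generating set in your hand.

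Two smaller remarks. First, your torsion-freeness endgame is logically sound within this paper --- Theorem \ref{t:tfn} does not depend on Theorem \ref{t:projections}, so invoking it to see that $\ker P\cap\toro$ is torsion-free is not circular --- but it is a much heavier tool than the situation requires, and of course was unavailable to \cite{CV2009}. Second, the assertion that ``any subgroup of unipotent integer matrices is torsion-free'' is false as stated (the standard unipotents generate all of $\SL_2(\mathbb{Z})$); what saves you is that by then the group is known to be abelian and generated by commuting unipotents, so you should phrase the argument in that order.
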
 When $Z(A_\G)$ is nontrivial there is a unique maximal abelian equivalence class $[v]$. Also $A_{[v]}=Z(A_\G)$ and we are in the following situation:

\begin{proposition}[\cite{CV2009}, Proposition 4.4]\label{p:projections2}
If $Z(A_\G)=A_{[v]}$ is nontrivial, then $$\outg \cong \text{Tr} \rtimes (\text{GL}(A_{[v]}) \times \out(A_{lk[v]})),$$
where $\text{Tr}$ is the free abelian group generated by the transvections $[\r_{ij}]$ such that $v_i \in lk[v]$ and $v_j \in [v]$. The map to $\text{GL}(A_{[v]})$ is given by the restriction map $R_v$, and the map to $\out(A_{lk[v]})$ is given by the projection map $P_v$. The subgroup $\text{Tr}$ is the kernel of the product  map $R_v \times P_v.$
\end{proposition}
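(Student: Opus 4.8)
The plan is to exploit the fact that a nontrivial centre forces $A_\G$ to split as a direct product. Since $Z(A_\G)=A_{[v]}$, every vertex of $[v]$ is adjacent to all other vertices of $\G$; hence $lk[v]=V(\G)\smallsetminus[v]$, $st[v]=\G$, and $A_\G=A_{[v]}\times A_{lk[v]}$ as an internal direct product in which $A_{[v]}$ is the centre and $A_{lk[v]}$ is centreless. Write $C=A_{[v]}$ and $H=A_{lk[v]}$ for brevity.

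First I would compute $\aut(C\times H)$ by hand. As $C=Z(C\times H)$ is characteristic, any $\phi\in\aut(C\times H)$ restricts to $\alpha=\phi|_C\in\aut(C)$ and descends to $\beta\in\aut((C\times H)/C)=\aut(H)$. Writing $\phi(1,h)=(f(h),\beta(h))$, one checks that $f\co H\to C$ is a homomorphism (using that $C$ is central and abelian, so that $f$ factors through $H^{ab}$) and that $\phi(c,h)=(\alpha(c)f(h),\beta(h))$. Conversely, any triple $(\alpha,\beta,f)$ with $\alpha\in\aut(C)$, $\beta\in\aut(H)$, $f\in\hom(H,C)$ defines an automorphism, invertible because it is ``upper triangular'' in $\alpha$ and $\beta$. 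A short composition computation then identifies $\aut(C\times H)\cong\hom(H,C)\rtimes(\aut(C)\times\aut(H))$, with $(\alpha,\beta)$ acting on the normal abelian subgroup $\hom(H,C)=\hom(H^{ab},C)$ by $f\mapsto\alpha\circ f\circ\beta^{-1}$.

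Next I would pass to $\outg$ by locating $\Inn(C\times H)$. Since $C$ is central, conjugation by $(c_0,h_0)$ equals conjugation by $(1,h_0)$ and sends $(c,h)\mapsto(c,h_0hh_0^{-1})$; in the coordinates above this is the triple $(\id_C,\,\mathrm{conj}_{h_0},\,0)$. Thus $\Inn(C\times H)\cong\Inn(H)$ lies entirely inside the $\aut(H)$ factor, with trivial $\hom(H,C)$- and $\aut(C)$-components. Because $f$ factors through $H^{ab}$, inner automorphisms of $H$ act trivially on $\hom(H,C)$, so the quotient is clean and gives $\outg\cong\hom(H,C)\rtimes(\aut(C)\times\out(H))$. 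Identifying $\aut(C)=\gl(A_{[v]})$ and $\out(H)=\out(A_{lk[v]})$ yields the asserted splitting. I expect this descent to be the main obstacle: one must verify that $\Inn$ meets only the $\aut(H)$ factor and acts trivially on $\hom(H,C)$, so that neither the $\hom$ nor the $\gl(A_{[v]})$ part collapses on passing to $\outg$.

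Finally I would match the remaining claims. The transvection $[\r_{ij}]$ with $v_i\in lk[v]$ and $v_j\in[v]$ corresponds to the triple $(\id,\id,f_{ij})$, where $f_{ij}\co H^{ab}\to C$ sends $v_i\mapsto v_j$ and kills the other generators; these $f_{ij}$ are the standard basis of $\hom(H^{ab},C)$, so $\text{Tr}=\hom(H,C)$ is free abelian with the stated basis. The projection of $\phi$ to $\gl(A_{[v]})$ is $\alpha=\phi|_C$, which is precisely $R_v$; and since $st[v]=\G$ and $\langle\langle A_{[v]}\rangle\rangle=C$, the projection to $\out(A_{lk[v]})$ is exactly $E_v=P_v$. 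Hence $\ker(R_v\times P_v)$ consists of the classes with $\alpha=\id$ and $\beta$ inner, which is precisely $\hom(H,C)=\text{Tr}$.
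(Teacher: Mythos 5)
The paper does not actually prove this proposition: it is imported verbatim from Charney--Vogtmann (\cite{CV2009}, Proposition~4.4), so there is no internal proof to compare against. Your argument is a correct, self-contained derivation, and I see no gap. The one external input you rely on is Servatius' description of the centre of a RAAG (it is generated by the vertices adjacent to every other vertex of $\G$), which yields the splitting $A_\G=C\times H$ with $C=A_{[v]}=Z(A_\G)$ and $H=A_{lk[v]}$ centreless; this is the same fact the paper itself invokes when it says $[v]$ consists of the vertices of $Z(A_\G)$, so it is fair to assume it. Given that, your computation of $\aut(C\times H)\cong\hom(H,C)\rtimes(\aut(C)\times\aut(H))$ is standard and correct --- note that the step $\phi(c,1)=(\alpha(c),1)$ genuinely needs $Z(H)=1$, so that $C\times\{1\}$ is exactly the centre and hence characteristic, and you do record this hypothesis. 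The descent to $\outg$ is also handled correctly: $\Inn(C\times H)\cong\Inn(H)$ sits inside the $\aut(H)$ factor, and it acts trivially on $\hom(H,C)$ precisely because every homomorphism to the abelian group $C$ factors through $H^{ab}$, so the quotient of the semidirect product by this subgroup is again a semidirect product $\hom(H,C)\rtimes(\aut(C)\times\out(H))$. The identification of the basis $f_{ij}$ of $\hom(H^{ab},C)$ with the transvections $[\r_{ij}]$, of $\alpha$ with $R_v$, and of $[\beta]$ with $P_v$ (using $st[v]=\G$, so the projection map degenerates to the exclusion map) then gives all the remaining assertions, including $\ker(R_v\times P_v)=\text{Tr}$. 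In short: a complete and essentially elementary proof of a statement the paper only cites.
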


In the above proposition we do not need to restrict $R_v$ and $P_v$ to $\outo$, as every automorphism of $A_\G$ preserves $Z(A_\G)=A_{[v]}$. When $\G$ is disconnected, the restriction and exclusion maps of Example \ref{e:disconnected} give us less information. As above, we may amalgamate the restriction maps $R_i$ and the exclusion map $E$, however in this situation the kernel of the amalgamated map is more complicated.
\section{The lower central series of $A_\G$} \label{s:lie theory}

In this section we shall gather some results on the lower central series of $A_\G$ and its associated Lie algebra that we require in the rest of the paper. In Proposition~\ref{l2basis} we give a basis for the free abelian group $\g_2(A_\G)/\g_3(A_\G)$ and in Theorem~\ref{c:centres} we give information about the structure of the Lie algebra $L=\oplus_{i=1}^\infty\g_i(A_\G)/\g_{i+1}(A_\G).$ From Magnus' work, most of the results are well known in the free group case (see, for example, Chapter 5 of \cite{MKS} or Chapter 2 of \cite{MR979493}). The extension to right-angled Artin groups originally appeared in Droms' thesis \cite{Droms2} and in papers of Duchamp and Krob \cite{MR1176154,DK1}. Right-angled Artin groups are known as \emph{graph groups} and \emph{partially commutative groups} respectively in these works. See \cite{W1} for a survey of some of the key results in this area.

\subsection{The Lie algebra $L$ and its enveloping algebra $U$} \label{s:background2}

Let $\g_c(A_\G)$ be the $c$th term in the lower central series of $A_\G$, so that $\g_1(A_\G)=A_\G$ and $\g_{c+1}(A_\G)=[\g_c(A_\G),A_\G]$. As we are keeping $A_\G$ fixed throughout we shall often shorten $\g_c(A_\G)$ to simply $\g_c$. Let $L_c=\g_c/\g_{c+1}$. Let $L= \oplus_{i=1}^\infty L_i$. An element of $L$ is of the form $\sum_{i=1}^\infty g_i.\g_{i+1}$ with $g_i \in \g_i$, and all but finitely many $g_i$ are equal to the identity. As $[\g_c,\g_d]\subset\g_{c+d}$, the $\mathbb{Z}$--module $L$ inherits a graded $\mathbb{Z}$--Lie algebra structure by taking commutators in $A_\G$ as follows: $$[\sum_{i=1}^\infty g_i.\g_{i+1}, \sum_{i=1}^\infty h_i.\g_{i+1}]=\sum_{c=2}^\infty \sum_{i+j=c}[g_i,h_j].\g_{c+1}.$$ $L$ is generated by the images of $v_1,\ldots,v_n$ in $L_1$. 

\begin{definition}
We say that an element $g \in A_\G$ is \emph{positive} if it is represented by a positive word in $V(\G)$ (without using letters of the form $v^{-1}$). 
\end{definition}

The set of positive elements in $A_\G$ forms a monoid $M$. Let $M_i$ be the subset of $M$ consisting of positive words of length $i$. Then $M_0$ is the set containing the identity element (the only word of length 0).

\begin{definition}
Let $U$ be the free $\Z$--module with a basis given by elements of $M.$ Let $U_i$ be the submodule of $U$ spanned by $M_i$. Then $U=\oplus_{i=0}^{\infty}U_i$, and multiplication in $A_\G$ gives $U$ the structure of a graded $\mathbb{Z}$--algebra.
\end{definition}

We will distinguish elements of $\env$ from $A_\G$ by writing positive elements in $\{\mathbf{v_1,\ldots,v_n}\}$ rather than $\{v_1,\ldots,v_n\}$.  Let $\enva$ be the algebra extending $\env$ by allowing infinitely many coefficients of a sequence of positive elements to be non-zero. Any element of $\enva$ can be written uniquely as a power series $a=\sum_{i=0}^{\infty}a_i,$ where $a_i$ is an element of $U_i.$ We say that $a_i$ is the \emph{homogeneous part} of $a$ of degree $i$. Each $a_i$ is a linear sum of positive elements of length $i$, so is of the form $a_i=\sum_{k=1}^m\lambda_k g_k$, where $g_k$ is a positive element in $A_\G$ of length $i$ and $\lambda_k \in \mathbb{Z} \setminus \{0\}.$ Let $\unia$ be the group of units in $\enva$. If $a$ is of the form $a=1 + \sum_{i=1}^{\infty}a_i,$ then $a \in \unia$ and $$a^{-1}=1-(a_1+a_2+\cdots)+(a_1+a_2+\cdots)^2-\ldots=1 + \sum_{i=1}^{\infty}b_i,$$ where $b_1=-a_1$ and $b_j=-a_j-\sum_{i=1}^{j-1}b_ia_{j-i}$ recursively. We have abused notation slightly in the 
above by writing $1$ as the leading coefficient rather than $1.1_{A_\G}.$ We may use $\enva$ to study $A_\G$ via the following theorem:

\begin{theorem}[{Droms \cite{Droms2}, Duchamp and Krob \cite[Theorem 1.2]{DK1}}]The mapping $v_i \mapsto 1 + \mathbf{v_i}$ induces an injective homomorphism $\mu:A_\G \to \unia.$ \end{theorem}

$\mu$ is sometimes called the \emph{Magnus map} or \emph{Magnus morphism}. There is a central series $D_1 \geq D_2 \geq D_3 \ldots$ of $\unia$ defined by letting $a \in D_c$ if and only if $a_i=0$ when $0 < i < c.$ It is related to the lower central series of $A_\G$ by the following theorem: 

\begin{theorem}[\cite{DK1}, Theorem 2.2] \label{mu is nice}
For all $c$ we have $\mu^{-1}(D_c)=\g_c.$ 
\end{theorem}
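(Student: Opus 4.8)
The plan is to prove the two inclusions $\g_c(A_\G)\subseteq \mu^{-1}(M_c)$ and $\mu^{-1}(M_c)\subseteq \g_c(A_\G)$ separately, following Magnus' strategy. For the first (easy) inclusion I would first check that $M_1\geq M_2\geq\cdots$ is a strongly central series, i.e. $[M_c,M_d]\subseteq M_{c+d}$. Writing $a=1+\alpha$, $b=1+\beta$ with $\alpha$ supported in degrees $\geq c$ and $\beta$ supported in degrees $\geq d$, a direct computation gives $aba^{-1}b^{-1}-1=(\alpha\beta-\beta\alpha)a^{-1}b^{-1}$; since $\alpha\beta-\beta\alpha$ is supported in degrees $\geq c+d$ while $a^{-1},b^{-1}$ have constant term $1$, the right-hand side lies in degrees $\geq c+d$. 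As $\mu(v_i)=1+\mathbf{v}_i\in M_1$, an easy induction then yields $\mu(\g_c(A_\G))\subseteq M_c$, which is the desired inclusion. Note this already forces $\bigcap_c\g_c(A_\G)=\{1\}$: by the inclusion just shown $\bigcap_c\g_c(A_\G)\subseteq\mu^{-1}\big(\bigcap_c M_c\big)$, and the only element of $A_\G$ whose image has all positive-degree homogeneous parts vanishing is $1$, since $\mu$ is injective. Thus $A_\G$ is residually nilpotent and every $g\neq 1$ has a well-defined \emph{weight} $d=\max\{k:g\in\g_k(A_\G)\}$.

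The second inclusion is the crux. For $g\in\g_d(A_\G)$ write $(\mu g)_d$ for the degree-$d$ homogeneous part of $\mu(g)\in\enva$. I would first observe that $g\mapsto (\mu g)_d$ is a homomorphism $\g_d(A_\G)\to U_d(A_\G)$ (the cross terms in $\mu(g)\mu(h)=\mu(gh)$ have degree $\geq 2d>d$) which kills $\g_{d+1}(A_\G)$ by the first inclusion, and so descends to a homomorphism $\rho_d\co L_d\to U_d(A_\G)$. The heart of the argument is the identification $\rho_d=\epsilon|_{L_d}$, which I would prove by induction on $d$. The case $d=1$ is $\mu(v_i)=1+\mathbf{v}_i$. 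For the inductive step I use that, as $L$ is generated in degree one, $L_d=\sum_{p+q=d}[L_p,L_q]$, so it suffices to check agreement on a bracket $\bar g=[\bar g_1,\bar g_2]$ with $\bar g_1\in L_p$, $\bar g_2\in L_q$, $p+q=d$. Choosing representatives $g_1\in\g_p(A_\G)$, $g_2\in\g_q(A_\G)$ and applying the commutator identity above to $g=[g_1,g_2]$, the degree-$d$ part of $\mu(g)$ equals $(\mu g_1)_p(\mu g_2)_q-(\mu g_2)_q(\mu g_1)_p$, which by the inductive hypothesis is the ring commutator $[\epsilon(\bar g_1),\epsilon(\bar g_2)]$ in $\mathcal{L}(\env)$; since $\epsilon$ is a Lie algebra homomorphism this equals $\epsilon([\bar g_1,\bar g_2])=\epsilon(\bar g)$. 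As $\rho_d$ and $\epsilon|_{L_d}$ are homomorphisms agreeing on a spanning set, they coincide.

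With $\rho_d=\epsilon|_{L_d}$ in hand the theorem follows quickly. Given $g\neq 1$ of weight $d$, its class $\bar g\in L_d=\g_d(A_\G)/\g_{d+1}(A_\G)$ is nonzero, so by injectivity of $\epsilon$ the degree-$d$ part $(\mu g)_d=\epsilon(\bar g)$ is nonzero; combined with $\mu(g)\in M_d$ from the first inclusion, this shows $\mu(g)\in M_d\ssm M_{d+1}$. Hence $\mu(g)\in M_c$ holds exactly when $d\geq c$, and by the definition of $d$ and the nesting of the lower central series this is equivalent to $g\in\g_c(A_\G)$. Therefore $\mu^{-1}(M_c)=\g_c(A_\G)$.

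I expect the main obstacle to be the inductive identification $\rho_d=\epsilon|_{L_d}$: one must set up the leading-term and commutator bookkeeping in $\enva$ carefully, and correctly invoke the given facts that $\epsilon$ is an injective Lie algebra homomorphism and that $L$ is generated in degree one (so that $L_d$ is spanned by brackets $[L_p,L_q]$). The two end inclusions are then formal, and residual nilpotence is extracted for free from the easy inclusion rather than cited separately.
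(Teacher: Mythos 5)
The paper gives no proof of this statement at all --- it is quoted verbatim from Duchamp--Krob \cite{MR1179860} --- so there is nothing to compare line by line; I can only assess your argument on its own terms, and it is correct. Your two-step scheme is the standard Magnus argument: the easy inclusion $\g_c(A_\G)\subseteq\mu^{-1}(M_c)$ via the identity $aba^{-1}b^{-1}-1=(\alpha\beta-\beta\alpha)a^{-1}b^{-1}$ and the strong centrality $[M_c,M_d]\subseteq M_{c+d}$ checks out, as does the descent of the leading-term map to $\rho_d\co L_d\to U_d(A_\G)$ and the inductive identification $\rho_d=\epsilon|_{L_d}$ (using that $L_d=[L_{d-1},L_1]$ because $L$ is generated in degree one). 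The one thing you should be conscious of is where the real work is hiding: the hard inclusion rests entirely on the injectivity of $\epsilon\co L\to\mathcal{L}(\env)$, which you take as a black box from the earlier cited theorem. That is legitimate given the order in which the paper states its imported results, but note that for $L$ \emph{defined} as $\oplus_c\g_c(A_\G)/\g_{c+1}(A_\G)$ the injectivity of $\epsilon$ on $L_d$ is essentially equivalent to the statement $\mu^{-1}(M_{d+1})\cap\g_d(A_\G)=\g_{d+1}(A_\G)$ you are trying to prove; in Duchamp--Krob the two are established together via a basis (PBW-type) theorem for the free partially commutative Lie algebra. So your proof is a valid derivation of one cited theorem from another, rather than an independent proof --- a fair trade, and your observation that residual nilpotence of $A_\G$ falls out of the easy inclusion plus injectivity of $\mu$ is a nice bonus that the paper also records immediately after the theorem.
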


As $\cap_{c=1}^{\infty}D_c=\{\pm 1\}$, it follows that $\cap_{c=1}^{\infty} \g_c= \{1\},$ hence $A_\G$ is residually nilpotent. This gives one an effective way of studying the lower central series of $\A.$ 

Let $\mu(g)_i$ be the $i$th homogeneous part of $\mu(g)$. Let $\mathcal{L}(\env)$ be $\env$ endowed with the Lie bracket defined by $[u_1,u_2]=u_1.u_2-u_2.u_1.$ Then $U$ and $L$ are related by the following theorem:

\begin{theorem}\label{t:env}$L$ is a free $\mathbb{Z}$--module, and therefore each $L_i=\g_i/\g_{i+1}$ is a free abelian group. There is a homomorphism $\alpha_i:L_i \to U_i$ given by $g.\g_{i+1} \mapsto \mu(g)_i$ which induces an injective Lie algebra homomorphism $\alpha \co L \to \mathcal{L}(U),$ so that $\alpha(L)$ is the Lie subalgebra of $\mathcal{L}(U)$ generated by $\{\mathbf{v_1,\ldots,v_n}\}$.
\end{theorem}

\begin{proof}Duchamp and Krob showed that $L$ is isomorphic to a \emph{free partially commutative Lie $\mathbb{Z}$--algebra} \cite[Theorem 2.1]{DK1} and that this algebra is free as a $\mathbb{Z}$--module \cite[Corollary II.16]{MR1176154}. They also show that $\mathcal{L}(U)$ is its enveloping algebra \cite[Corollary I.2]{MR1176154}, and as $L$ is free as a $\mathbb{Z}$--module, the natural map from $L$ into its enveloping algebra is injective by the Poincar\'e--Birkhoff--Witt Theorem \cite[I.2.7 Corollary 2]{MR979493}. It only remains to check that $\alpha$ is this natural map. This is described in Section~6 of \cite{W1}. \end{proof}

It is helpful to get get a clear picture of a basis for $L$ (or equivalently, of $\alpha(L)$). We use the basis consisting of \emph{Lyndon heaps}, or \emph{Lyndon elements}, introduced by Lalonde \cite{L1}. We do not need a complete description of a basis, but will make use of the following theorem:

\begin{theorem}[Lalonde, \cite{L1}] \label{t:basis} There is a total ordering $<_M$ of $M$ and a basis $B=\{\beta(g)\}_{g \in LH}$ of $L$ indexed by a subset $LH \subset M$ such that: \begin{enumerate} \item Each $\alpha(\beta(g)) \in U$ is a homogeneous element of degree $|g|$. \item The coefficient of $g$ in $\alpha(\beta(g)) \in U_{|g|}$ is equal to 1. \item If the coefficient of $h$ is non-zero in $\alpha(\beta(g))$ then $g\leq_M h$.\end{enumerate} \end{theorem}

The element $\beta(g)\in L$ is obtained from $g$ by an appropriate bracketing operation. Theorem~\ref{t:basis} has the following useful corollary:

\begin{cor}\label{c:coeff}
Let $pL$ and $pU$ be the ideals consisting of $p$th multiples of elements in $L$ and $U$ respectively. Let $x \in L$. Then $x \in pL$ if and only if $\alpha(x) \in pU$.
\end{cor}

\begin{proof}                                                                                                                                                                                                                                                                                                                                                                                                                                                                                                                                                                                                                                                                                                                                                                    
As $\alpha$ is a homomorphism, if $x \in pL$ then $\alpha(x) \in pU$. Conversely, suppose that $\alpha(x) \in pU$, so that the coefficient of every term in $\alpha(x)$ is divisible by $p$. By Theorem~\ref{t:basis}, there exist $g_1,\ldots,g_m \in LH$ and $i_1,\ldots,i_m \in \mathbb{Z}$ such that $$\alpha(x)=i_1\alpha(\beta(g_1))+i_2\alpha(\beta(g_2))+\cdots + i_m\alpha(\beta(g_m)).$$ We may also assume that $g_1 <_M g_2 <_M \cdots <_M g_m$. By Parts 2 and 3 of Theorem~\ref{t:basis}, the element $g_1$ is minimal in the decomposition of $\alpha(x)$ and has coefficient $i_1$. Hence $i_1=pj_1$ for some $j_1 \in \mathbb{Z}$. Then $\alpha(x - pj_1\beta(g_1)) \in pU$ and $$\alpha(x - pj_1\beta(g_1))= i_2\alpha(\beta(g_2))+\cdots + i_m\alpha(\beta(g_m)).$$ By the same argument as above $p$ divides $i_2$, and continuing by induction we find that $p$ divides $i_k$ for all $k$. Hence $x \in pL$. \end{proof}

\subsection{More information on the structure of $L$}

We now use our associative algebra to give us information about the the structure of $L$.

\begin{proposition} \label{l2basis}
Let $S=\{[v_i,v_j]:i < j, [v_i,v_j]\neq 0\}$. The set $S.\g_3$ is a basis of the free abelian group $L_2=\g_2/\g_3$.
\end{proposition}

\begin{proof} 
The group $\g_2=[A_\G,A_\G]$ is the normal closure of $S$, therefore any element $g \in \g_2$ is of the form $g=g_1s_1^{e_1}g_1^{-1}\cdots g_k s_k^{e_k} g_k^{-1}$, where $s_i \in S$ and $g_i \in A_\G$. However, $[g_i,s_i] \in \g_3$ for all $i$, therefore the image of $g$ in $\g_2/\g_3$ is equal to $s_1^{e_1}\cdots s_k^{e_k}.\g_3$ and $L_2$ is generated by the set $S.\g_3$. We have \begin{align*}\mu([v_i,v_j])&=(1+\mathbf{v_i})(1+\mathbf{v_j})(1+\mathbf{v_j})^{-1}(1+\mathbf{v_i})^{-1} \\ &=(1+\mathbf{v_i})(1+\mathbf{v_j})(1-\mathbf{v_i}+\mathbf{v_i}^2-\cdots)(1-\mathbf{v_j}+\mathbf{v_j}^2-\cdots) \\&=1+\mathbf{v_iv_j-v_jv_i}+ \cdots\end{align*} Under the homomorphism $\alpha_2:L_2 \to U_2$ given in Theorem~\ref{t:env} we have $$\alpha_2([v_i,v_j].\g_3)=\mu([v_i,v_j])_2=\mathbf{v_iv_j-v_jv_i}.$$ The free abelian group (or free $\mathbb{Z}$--module) $U_2$ has a basis consisting of positive elements of length $2$ in $A_\G$, the set: $$\{\mathbf{v_i^2}:v_i \in V(\G)\}\cup\{\mathbf{v_iv_j}:v_i,v_j\in V(\G) \text{ 
and $i <j$ or $[v_i,v_j]\neq0$}\}.$$ The image of $S.\g_3$ under $\alpha_2$ is linearly independent in $U_2$, and therefore forms a basis of $L_2$.
\end{proof}

We shall use Proposition \ref{l2basis} in Section \ref{s:johnson} to describe the abelianisation of $\torg$.  Next, we look at the bracket operation in $U$. Let $Z(\_)$ denote the centre of a group or Lie algebra. 

\begin{proposition} \label{p:centres}
Suppose that $Z(A_\G) = \{1\}$. Let $a=\sum_{k=1}^m\lambda_kg_k$ be a (not necessarily homogeneous) element of $U$. Then for each $k$ there exists $v \in V(\G)$ such that the coefficient of $g_k\mathbf{v}$ in $[a,\mathbf{v}]$ is $\lambda_k$.
\end{proposition}

\begin{proof}
As we can move between any two reduced words representing $g_k$ by a sequence of swaps of consecutive commuting letters the set of vertices that can occur at the start of reduced word representing $g_k$ forms a clique in $\G$ -- all such vertices commute. Call this set $init(g_k)$. Pick $w \in init(g_k)$. As $Z(A_\G)=\{1\}$ there exists $v \in V(\G)$ such that $[v,w]\neq1$. Hence $v \not \in init(g_k)$. We want to look at the coefficient of $g_k\mathbf{v}$ in $$[a,\mathbf{v}]=\sum_{k=1}^m\lambda_kg_k\mathbf{v}-\sum_{k=1}^m\lambda_k\mathbf{v}g_k.$$ Clearly $g_k\mathbf{v}=g_l\mathbf{v}$ if and only if $k=l$. Also, $w \in init(g_k\mathbf{v})$ so $v \not \in init(g_k\mathbf{v})$. As $v \in init(\mathbf{v}g_l)$ for all $l$ we have $g_k\mathbf{v}\neq \mathbf{v}g_l$. Hence the coefficient of $g_k\mathbf{v}$ in $[a,\mathbf{v}]$ is $\lambda_k$.
\end{proof}

The above lets us control the centre of $L$, as well as the centre of its quotient by the ideal $pL$ consisting of $p$th multiples of elements in $L$, and its quotient by the ideal $\oplus_{i=c+1}^\infty L_i$. We first frame this in terms of elements of $A_\G$:

\begin{proposition} \label{pp:centres}
Suppose that $Z(A_\G)=\{1\}$ and $g \in \g_c$. Then \begin{itemize} \item If $[g,v] \in \g_{c+2}$ for all $v \in V(\G)$ then $g \in \g_{c+1}$. \item If for each $v \in V(\G)$ there exists $w \in \g_{c+1}$ such that $[g,v].\g_{c+2}=w^p.\g_{c+2}$ then there exists $h \in \g_{c}$ such that $g.\g_{c+1}=h^p.\g_{c+1}$. \end{itemize}
\end{proposition}

\begin{proof}
Suppose that $g \not \in \g_{c+1}$, and let $\alpha(g.\g_{c+1})=\sum_{i=1}^k\lambda_kg_k$ with $\lambda_k \neq 0$ for all $k$. For every $v \in V(\G)$, we have: \begin{align*}\alpha([g,v].\g_{c+2})&=\alpha([g.\g_{c+1},v.\g_2]) \\ &=[\alpha(g.\g_{c+1}),\alpha(v.\g_2)]\\ &= [\alpha(g.\g_{c+1}), \mathbf{v}]. \end{align*} By Proposition~\ref{p:centres}, for all $k$ there exists $v \in V(\G)$ such that the coefficient of $g_k\mathbf{v}$ in $\alpha([g,v].\g_{c+2})$ is equal to $\lambda_k$. In particular $[g,v].\g_{c+2} \neq 0$ and $[g,v] \not \in \g_{c+2}.$ This completes the first part of this proposition. For the second part, assume further that $g.\g_{c+1}$ is not a $p$th power in $\g_c/\g_{c+1}$. Then $g.\g_{c+1}$ does not lie in $pL$, and $\alpha(g.\g_{c+1})$ does not lie in $pU$ by Corollary~\ref{c:coeff}. Therefore we can choose $\lambda_k$ and $v$ as above so that $p$ does not divide $\lambda_k$, and so that $\alpha([g,v].\g_{c+2})$ is not a $p$th multiple in $U$. Hence there does not exist $w$ such that $[
g,v].\g_{c+2}=w^p.\g_{c+2}$.
\end{proof}

Given $x \in L \ssm \{0\}$ the above proposition tells us that we can find $v \in V(\G)$ such that $[x,v.\g_2] \neq 0$. If $x \not \in \oplus_{i=c}^\infty L_i$ we can choose $v$ such that $[x,v.\g_2] \not \in \oplus_{i=c+1}^\infty L_i$. Finally, if $x \not \in pL$ we can choose $v$ such that $[x,v.\g_2] \not \in pL$. Hence: 

\begin{theorem} \label{c:centres}
If $Z(A_\G)=\{1\}$ then $Z(L)=Z(L/pL)=0$ and $Z(L/(\oplus_{i=c+1}^\infty L_i))=L_c/(\oplus_{i=c+1}^\infty L_i)$. \end{theorem}

\section{The Andreadakis--Johnson Filtration of $\torg$}\label{s:johnson}

In this section we follow the methods of Bass and Lubotzky \cite{BL} to extend the notion of \emph{higher Johnson homomorphisms} from the free group setting to general right-angled Artin groups. Coupled with the work in the previous sections of the paper, these allow us to describe the abelianisation of $\torg$, and show that $\torg$ has a separating central series $G_1,G_2,G_3,\ldots$ where each quotient $G_i/G_{i+1}$ is a finitely generated free abelian group.  This was first studied in the case of free groups by Andreadakis. We show that the image of this series in $\outg$ satisfies the same results.

\subsection{Definition and application to $H_1(\torg)$}

As $\g_c$ is characteristic, there is a natural map $\autg \to \aut(A_\G/\g_c)$. Let $G_{c-1}$ be the kernel of this map. Then $G_0=\autg$ and $G_1=\torg$. Let $\bar{g}$ denote the image of $g$ in $H_1(A_\G)$. The following proposition is proved in the same way as Proposition 2.2 of \cite{BW2010}.

\begin{proposition}
Let $\phi \in G_c$, where $c \geq 1$. There is a homomorphism $\tau_c(\phi):L_1 \to L_{c+1}$ defined by $\tau_c(\phi)(\bar{g})=\phi(g)g^{-1}.\g_{c+2}$. The map $$ \tau_c\co G_c \to \hom(L_1,L_{c+1}) $$ is also homomorphism with $\ker(\tau_c)=G_{c+1}$.
\end{proposition}

As $\cap_{c=1}^{\infty} \g_c=\{1\}$ (from Theorem~\ref{mu is nice}), it follows that $\cap_{c=1}^{\infty}G_c=\{1\}.$ As $L_1$ and $L_{c+1}$ are free abelian (Theorem~\ref{t:env}), $\hom(L_1,L_{c+1})$ is free abelian, and therefore $G_c/G_{c+1}$ is free abelian. One can check that $[G_c,G_d] \subset G_{c+d}$, so that $G_1,G_2,G_3,\ldots$ is a central series of $\torg$. The rank of each $L_c$ has been calculated in \cite{DK1}, although more work is needed to calculate the ranks of the quotients $G_c/G_{c+1}$. In the free group case $G_1/G_2$, $G_2/G_3$ and $G_3/G_4$ are known \cite{Pettet05,Satoh06} but as yet there is no general formula. In this paper we restrict ourselves to studying the abelianisation of $\torg$, using the generating set $\mathcal{M}_\G$ defined in Section \ref{s:torelli intro}.

\begin{theorem}
The first Johnson homomorphism $\tau_1$ maps $\mathcal{M}_\G$ to a free generating set of a subgroup of $\hom(L_1,L_2)$. The abelianisation of $\torg$ is isomorphic to the free abelian group on the set $\mathcal{M}_\G$, and $G_2$ is the commutator subgroup of $\torg$. 
\end{theorem}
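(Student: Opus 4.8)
The plan is to show that $\tau_1$ sends the generating set $\mathcal{M}_\G$ to a set of linearly independent elements of $\hom(L_1,L_2)$. Since we already know $G_1/G_2 = \torg/G_2$ is free abelian (because $\hom(L_1,L_2)$ is free abelian and $\ker\tau_1 = G_2$), the homomorphism $\tau_1$ factors through a \emph{monomorphism} $\torg/G_2 \hookrightarrow \hom(L_1,L_2)$. So it suffices to compute the images $\tau_1(K_{ij})$ and $\tau_1(K_{ijk})$ explicitly and verify they are linearly independent. If I can do this, then the images freely generate their span, and pulling back, the set $\mathcal{M}_\G$ must freely generate $\torg/G_2$ as a free abelian group; consequently $\mathcal{M}_\G$ abelianises to a basis of $H_1(\torg)$, $G_2$ equals the commutator subgroup $[\torg,\torg]$, and $\mathcal{M}_\G$ is a minimal generating set.

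First I would compute $\tau_1$ on each generator. Recall $\tau_1(\phi)(v_k\g_2) = \phi(v_k)v_k^{-1}\g_3(A_\G)$, an element of $L_2 = \g_2(A_\G)/\g_3(A_\G)$, whose basis by Proposition~\ref{l2basis} is $\{[v_i,v_j]\g_3 : i<j,\ [v_i,v_j]\neq 0\}$. For a partial conjugation $K_{ij}$, which conjugates the vertices $v_k$ lying in the component $\G_{ij}$ by $v_j$, we have $K_{ij}(v_k)v_k^{-1} = v_jv_kv_j^{-1}v_k^{-1} = [v_j,v_k]$ modulo $\g_3$ for $v_k\in\G_{ij}$, and $0$ otherwise. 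So $\tau_1(K_{ij})$ is the homomorphism $L_1\to L_2$ sending $v_k \mapsto [v_j,v_k]\g_3$ for each $v_k\in\G_{ij}$. For the generator $K_{ijk}$ (with $v_i\le v_j,v_k$, $[v_j,v_k]\neq 0$, $j<k$), we have $K_{ijk}(v_i)v_i^{-1} = v_i[v_j,v_k]v_i^{-1} = [v_j,v_k]$ modulo $\g_3$, and it fixes every other generator, so $\tau_1(K_{ijk})$ sends $v_i\mapsto [v_j,v_k]\g_3$ and all other basis vectors to $0$.

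The key step is then the linear independence check. The cleanest bookkeeping is to record each image $\tau_1(\phi)\in\hom(L_1,L_2)$ as a matrix of integer coordinates indexed by source basis vectors $v_k$ (columns) and target basis vectors $[v_a,v_b]\g_3$ (rows). The $K_{ijk}$ contribute coordinate vectors with a single nonzero entry: a $1$ in position $(v_i$-column$,\,[v_j,v_k]$-row$)$. The constraints on $K_{ijk}$ (namely $v_i\le v_j,v_k$ together with $[v_j,v_k]\neq 0$) guarantee these are genuine, distinct basis-indexed positions. The partial conjugations $\tau_1(K_{ij})$ are supported on columns $v_k$ with $v_k\in\G_{ij}$, landing in rows $[v_j,v_k]$; I would argue independence by isolating, for each generator, a basis position that it hits but which is not overwritten by the others. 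I expect the main obstacle to be precisely disentangling the partial conjugations from one another and from the $K_{ijk}$: the $K_{ij}$ have overlapping support in $\hom(L_1,L_2)$, so I cannot simply read off a single distinguishing coordinate as with the $K_{ijk}$. The right way through is to exploit the structure of the components $\G_{ij}$ of $\G - st(v_j)$ — an entry $[v_j,v_k]$ in the $v_k$-column arises from $K_{ij}$ exactly when $v_k$ lies in $\G_{ij}$, and for a fixed $v_k$ the various partial conjugations $K_{ij}$ (ranging over the legitimate indices $i$, i.e. over components) are distinguished by \emph{which} vertex $v_j$ is conjugating, i.e. by the row index $[v_j,v_k]$. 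A careful argument organising the generators by their conjugating vertex $v_j$ and the component structure, and checking no $K_{ijk}$ can cancel a partial-conjugation contribution (the $K_{ijk}$ act only on the single generator $v_i$ with $v_i$ below two distinct vertices), should close the independence claim and hence establish the theorem.
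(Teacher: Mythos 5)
Your proposal is correct and follows essentially the same route as the paper: compute $\tau_1(K_{ij})$ and $\tau_1(K_{ijk})$ explicitly against the basis of $L_2$ from Proposition~\ref{l2basis}, check linear independence in $\hom(L_1,L_2)$, and then read off the statements about $H_1(\torg)$ and $G_2=\ker\tau_1$. In fact you supply more detail than the paper does on the one nontrivial point (the paper simply asserts independence), and your bookkeeping is sound: distinct generators of $\mathcal{M}_\G$ occupy pairwise distinct coordinate positions, since for a fixed column $v_l$ the row $[v_j,v_l]$ determines the conjugating vertex $v_j$ and hence the component $\G_{ij}\ni v_l$, while a $K_{ijk}$ entry sits in row $[v_j,v_k]$ of column $v_i$ with $v_i\notin\{v_j,v_k\}$, so it can never coincide with a partial-conjugation entry.
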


\begin{proof}
By Proposition \ref{l2basis}, the free abelian group $L_2$ has a basis consisting of the set $\{[v_i,v_j].\g_3:i < j, [v_i,v_j]\neq 0\}.$ This allows us to obtain an explicit description of the images of elements of $\mathcal{M}_\G$:
\begin{align*} \tau_1(K_{ij})(v_l)&= \begin{cases} 1.\g_3 & \text{if $v_l \not \in \G_{ij}$} \\
               [v_j,v_l].\g_3 & \text{if $v_l \in \G_{ij}$}
              \end{cases} \\ \tau_1(K_{ijk})(v_l)&= \begin{cases} 1.\g_3 & \text{if $l \neq i$} \\
                [v_j,v_k].\g_3 & \text{if $l = i$}
               \end{cases} \end{align*}
These elements are linearly independent in $\hom(L_1,L_2)$. The second statement follows immediately, and the third follows as $G_2=\ker(\tau_1)$.
\end{proof}

\begin{cor}
 $\mathcal{M}_{\G}$ is a minimal generating set of $\torg$.
                                 \end{cor}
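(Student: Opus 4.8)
The plan is to derive this immediately from the preceding theorem, which identifies $H_1(\torg)$ as the free abelian group on the image of $\mathcal{M}_\G$. The key principle is that abelianisation sends generating sets to generating sets, so the minimal cardinality of a generating set of $\torg$ is bounded below by the minimal cardinality of a generating set of $H_1(\torg)$; and for a free abelian group the latter is exactly the rank.

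Concretely, I would argue by contradiction. Suppose some proper subset $\mathcal{M}' \subsetneq \mathcal{M}_\G$ generated $\torg$. Applying the abelianisation map $\torg \to H_1(\torg)$, the images of the elements of $\mathcal{M}'$ would then generate $H_1(\torg)$. But the theorem tells us that the images of the elements of $\mathcal{M}_\G$ form a free basis of $H_1(\torg) \cong \mathbb{Z}^{|\mathcal{M}_\G|}$. The images of $\mathcal{M}'$ are therefore a proper subset of this basis, and any proper subset of a basis of a free abelian group of rank $|\mathcal{M}_\G|$ spans a proper direct summand — in particular it cannot contain the basis element coming from any $x \in \mathcal{M}_\G \ssm \mathcal{M}'$. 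This contradicts the assumption that $\mathcal{M}'$ generates, so no proper subset of $\mathcal{M}_\G$ can generate $\torg$.

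Since Day's theorem (quoted in Section~\ref{s:torelli intro}) already guarantees that $\mathcal{M}_\G$ itself generates $\torg$, this establishes that $\mathcal{M}_\G$ is a minimal generating set. The same computation also shows that every generating set of $\torg$ has at least $|\mathcal{M}_\G|$ elements, so $\mathcal{M}_\G$ is moreover of minimal cardinality.

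There is no real obstacle here: this is a routine consequence of the fact that the first Johnson homomorphism $\tau_1$ was shown to send $\mathcal{M}_\G$ to a \emph{linearly independent} set. The only point requiring care is that linear independence of the images — not merely their being a generating set of $H_1(\torg)$ — is what rules out any hidden redundancy among the generators, and this is exactly what the preceding theorem supplies.
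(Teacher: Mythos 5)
Your argument is correct and is precisely the reasoning the paper intends (the corollary is left as an immediate consequence of the preceding theorem): since $\tau_1$ sends $\mathcal{M}_\G$ to a basis of the free abelian group $H_1(\torg)$, no proper subset can generate even the abelianisation, let alone $\torg$ itself. Your additional observation that every generating set must have at least $|\mathcal{M}_\G|$ elements is a valid strengthening obtained by the same computation.
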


\subsection{Example: the pentagon} Suppose that $\G$ is the pentagon shown in Figure~1.
\begin{figure} [ht]
\includegraphics{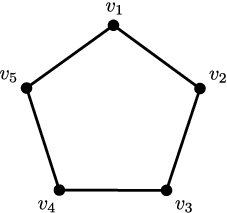}
\centering
\caption{The pentagon graph.}
\label{fig:pentagon}
\end{figure}
In this case, if $v_i \leq v_j$ then $v_i = v_j$, therefore no elements of the form $K_{ijk}$ exist in $\torg$. Removing $st(v_i)$ from $\G$ leaves exactly one connected component consisting of the two vertices opposite $v_i$, therefore
$$\mathcal{M}_\G=\{K_{13},K_{24},K_{35},K_{41},K_{52}\}.$$
Hence $H_1(\torg)=G_1/G_2 \cong \mathbb{Z}^5$. Also, $\{[v_1,v_3],[v_1,v_4],[v_2,v_4],[v_2,v_5],[v_3,v_5]\}$ is a set of coset representatives of $\g_2$ in $\g_3$, therefore $\hom(L_1,L_2) \cong \mathbb{Z}^{25}.$ In particular $\tau_1$ is not surjective (in contrast to the free group situation --- see \cite{Pettet05}).

\subsection{Extension to $\outg$}
We'd now like to move our attention to the images of $G_1,G_2,\ldots$ in $\outg$, which we will label $H_1,H_2, \ldots$ respectively. Let $\pi$ be the natural projection $\autg \to \outg$.   The action of an element of $A_\G$ on $A_\G$ by conjugation induces a homomorphism $ad\co A_\G \to \autg$.

\begin{lemma}\label{l:e1}
If $Z(A_\G)=\{1\}$, then $g \in \g_c$ if and only if $ad(g) \in G_c$.
\end{lemma}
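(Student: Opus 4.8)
The plan is to first unwind the definition of $G_c$. By construction $G_c = \ker(\autg \to \aut(A_\G/\g_{c+1}(A_\G)))$, so an automorphism $\phi$ lies in $G_c$ precisely when $\phi(h)h^{-1} \in \g_{c+1}(A_\G)$ for every $h \in A_\G$. Applying this to $\phi = ad(g)$, where $ad(g)(h) = ghg^{-1}$, the condition $ad(g) \in G_c$ becomes $[g,h] = ghg^{-1}h^{-1} \in \g_{c+1}(A_\G)$ for all $h \in A_\G$. The lemma is thus equivalent to the statement that $g \in \g_c(A_\G)$ if and only if $[g,h] \in \g_{c+1}(A_\G)$ for every $h$. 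The case $g = 1$ is trivial, so I assume $g \neq 1$.

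One direction is immediate: if $g \in \g_c(A_\G)$ then $[g,h] \in [\g_c(A_\G), A_\G] = \g_{c+1}(A_\G)$ directly from the definition of the lower central series, so $ad(g) \in G_c$.

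For the converse I would argue by contradiction using Theorem~\ref{c:centres}. Suppose $ad(g) \in G_c$ but $g \notin \g_c(A_\G)$. Since $\cap_{d} \g_d(A_\G) = \{1\}$ and $g \neq 1$, there is a unique $d$ with $g \in \g_d(A_\G) \setminus \g_{d+1}(A_\G)$, and the assumption forces $d < c$; write $\bar g \in L_d$ for the nonzero image of $g$. It suffices to test the hypothesis on generators: for each $v \in V(\G)$ we have $[g,v] \in \g_{c+1}(A_\G) \subseteq \g_{d+2}(A_\G)$, while the Lie bracket $[\bar g, \bar v]$ is by definition the image of $[g,v]$ in $L_{d+1} = \g_{d+1}(A_\G)/\g_{d+2}(A_\G)$. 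Hence $[\bar g, \bar v] = 0$ for every generator $v$, and since $L_1$ is generated by the $\bar v$ and $L$ is generated as a Lie algebra by $L_1$, a short induction on bracket length via the Jacobi identity yields $[\bar g, y] = 0$ for all $y \in L$; that is, $\bar g \in Z(L)$. But $Z(A_\G) = \{1\}$, so Theorem~\ref{c:centres} gives $Z(L) = 0$, forcing $\bar g = 0$ and contradicting $g \notin \g_{d+1}(A_\G)$.

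The only genuinely substantive step is the reduction of centrality in $L$ to the vanishing of the brackets $[\bar g, \bar v]$ against generators, together with the appeal to Theorem~\ref{c:centres}; everything else is bookkeeping about the lower central series. I expect the main subtlety to lie in confirming that $[\bar g, \bar v]$ is genuinely computed as the degree-$(d+1)$ image of the group commutator $[g,v]$, and that the passage from $[\bar g, L_1] = 0$ to $\bar g \in Z(L)$ is legitimate. Both are standard features of the associated graded Lie algebra, but each deserves an explicit line in the write-up.
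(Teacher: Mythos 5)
Your proof is correct and follows essentially the same route as the paper: reduce $ad(g)\in G_c$ to the vanishing of the brackets of the image of $g$ against the generators in the graded Lie algebra $L$, and then invoke Theorem~\ref{c:centres}. The only cosmetic difference is that you argue by contradiction using the clause $Z(L)=0$, whereas the paper argues contrapositively via the description of $Z(L^c)$ as the image of $L_c$; both are immediate consequences of the same theorem and the bookkeeping ($d<c$, so $\g_{c+1}(A_\G)\subseteq\g_{d+2}(A_\G)$) is handled correctly.
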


\begin{proof}
If $g \in \g_c$ then $ghg^{-1}.\g_{c+1}=h.\g_{c+1}$ for all $h \in A_\G$. Hence $ad(g) \in G_c$. Conversely, suppose that $g \not \in \g_c$. Then there exists a unique integer $d <c$ with $g \in \g_d$ and $g \not \in \g_{d+1}$. By Proposition~\ref{pp:centres}, there exists $v \in V(\G)$ such that $[g,v] \not \in \g_{d+2} $. As $\g_{c+1} \subset \g_{d+2}$ we have $gvg^{-1}.\g_{c+1} \neq v. \g_{c+1}$. Hence $ad(g) \not \in G_c$. 
\end{proof}

\begin{proposition} \label{p:e}
When $Z(A_\G)=\{1\}$ we have an exact sequence of abelian groups:
$$ 0 \to \g_{c}/\g_{c+1} \xrightarrow{\alpha} G_c/G_{c+1} \xrightarrow{\beta} H_c/H_{c+1} \to 0,$$
where $\alpha$ and $\beta$ are induced by $ad$ and $\pi$ respectively. \end{proposition}

\begin{proof}
By Lemma~\ref{l:e1}, the map $\alpha$ is injective. The sequence $H_1, H_2, \ldots,$ is defined to be the image of $G_1, G_2, \ldots $ in $\outg$, so the map $\beta$ is surjective. Furthermore, $\pi(ad(A_\G))=\{1\}$, so that $\im(\alpha) \subset \ker(\beta)$. It remains to check that $\ker(\beta)\subset \im(\alpha)$.  Suppose that $\phi \in G_c$ and $[\phi] \in H_{c+1}$. Then there exists $g \in A_\G$ and $\psi \in G_{c+1}$ such that $\phi=ad(g)\psi$. Then $ad(g)=\phi\psi^{-1} \in G_c$, so by Lemma~\ref{l:e1} we have $g \in \g_c$. Hence $\phi. G_{c+1}=ad(g). G_{c+1}$ and $\phi .G_{c+1}$ is in the image of $\g_c/\g_{c+1}$.\end{proof}

\begin{theorem}
If $Z(A_\G)=\{1\}$ and $c \geq 1$ then the group $H_c /H_{c+1}$ is torsion-free.
\end{theorem}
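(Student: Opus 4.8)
The plan is to exploit the short exact sequence
$$0 \to \g_c(A_\G)/\g_{c+1}(A_\G) \xrightarrow{\alpha} G_c/G_{c+1} \xrightarrow{\beta} H_c/H_{c+1} \to 0$$
established above. Since $G_c/G_{c+1}$ embeds in the finitely generated free abelian group $\hom(L_1,L_{c+1})$, its quotient $H_c/H_{c+1}$ is finitely generated abelian, so it is free abelian precisely when it is torsion-free, i.e.\ when it has no $p$-torsion for every prime $p$. First I would tensor the sequence with $\Z/p\Z$. As $G_c/G_{c+1}$ is free abelian its $\mathrm{Tor}_1$ vanishes, so the long exact sequence identifies the $p$-torsion subgroup $\mathrm{Tor}_1(H_c/H_{c+1},\Z/p\Z)$ of $H_c/H_{c+1}$ with $\ker(\alpha\otimes\mathrm{id})$. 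Thus it suffices to prove that $\alpha\otimes\mathrm{id}_{\Z/p\Z}$ is injective for every prime $p$.

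To do this I would precompose with the Johnson homomorphism. Recall that $\tau_c$ embeds $G_c/G_{c+1}$ into $\hom(L_1,L_{c+1})$, and that for $g\in\g_c(A_\G)$ the inner automorphism $ad(g)$ satisfies $\tau_c(ad(g))\colon v \mapsto [g,v]$. Hence the composite $\rho=\tau_c\circ\alpha\colon L_c \to \hom(L_1,L_{c+1})$ is the adjoint map $g \mapsto (v\mapsto[g,v])$, and injectivity of $\alpha\otimes\mathrm{id}$ follows from injectivity of $\rho\otimes\mathrm{id}_{\Z/p\Z}$. Because $L_1$ is free abelian of finite rank, there is a natural isomorphism $\hom(L_1,L_{c+1})\otimes\Z/p\Z \cong \hom(L_1\otimes\Z/p\Z,\,L_{c+1}\otimes\Z/p\Z)$, under which $\rho\otimes\mathrm{id}$ becomes the degree-$c$ component of the adjoint representation of the mod-$p$ Lie algebra $(L)_p$, namely $\bar g \mapsto (\bar v \mapsto [\bar g,\bar v])$.

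The heart of the argument is then immediate from the earlier structural results. An element $\bar g \in (L_c)_p$ lies in the kernel of this map if and only if $[\bar g,\bar v]=0$ for all $\bar v \in (L_1)_p$; since $(L)_p$ is generated by $(L_1)_p$ and $[\bar g,-]$ is a derivation, this forces $\bar g \in Z((L)_p)$. By Theorem \ref{c:centres}, the hypothesis $Z(A_\G)=\{1\}$ gives $Z((L)_p)=0$, so $\bar g=0$ and $\rho\otimes\mathrm{id}$ is injective, completing the proof. The main obstacle is conceptual rather than computational: one must route the torsion-freeness of $H_c/H_{c+1}$ through the centre of the mod-$p$ Lie algebra, so that the single input $Z((L)_p)=0$ does all the work. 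The only technical care needed is the $\hom$--tensor identification, which is where finite freeness of $L_1$ is used, together with the verification that $\tau_c$ carries inner automorphisms to adjoint maps.
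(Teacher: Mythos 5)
Your proof is correct, and it rests on exactly the same two pillars as the paper's: the exact sequence $0 \to \g_c(A_\G)/\g_{c+1}(A_\G) \to G_c/G_{c+1} \to H_c/H_{c+1} \to 0$ and the vanishing $Z((L)_p)=0$ from Theorem \ref{c:centres}. Where you differ is in how you travel between them. The paper argues at the level of elements: it shows the image of $ad$ is a pure subgroup of $G_c/G_{c+1}$ by taking $\phi\in G_c$ with $ad(g)G_{c+1}=\phi^p G_{c+1}$, computing $\phi(x)^p \equiv x w_x^p \bmod \g_{c+2}(A_\G)$ by commutator calculus to conclude $\bar g \in Z((L)_p)$, then extracting a $p$-th root $h$ of $g$ modulo $\g_{c+1}(A_\G)$ and invoking unique roots in the torsion-free group $G_c/G_{c+1}$. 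You instead reformulate torsion-freeness of the cokernel homologically via $\mathrm{Tor}_1(H_c/H_{c+1},\Z/p\Z)\cong\ker(\alpha\otimes\mathrm{id})$, and then observe that $\tau_c\circ\alpha$ is precisely the adjoint representation $L_c\to\hom(L_1,L_{c+1})$, whose mod-$p$ kernel lands in $Z((L)_p)$ because $(L)_p$ is generated in degree one. Your route buys a cleaner reduction --- it avoids both the $p$-th power commutator computation and the unique-roots step, replacing them with the (easy) identification of $\tau_c(ad(g))$ with $\mathrm{ad}(\bar g)$ --- at the cost of the small bookkeeping around $\hom(L_1,L_{c+1})\otimes\Z/p\Z\cong\hom((L_1)_p,(L_{c+1})_p)$, which is legitimate since $L_1$ is finitely generated free. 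All the supporting facts you use (injectivity of $\tau_c$ modulo $G_{c+1}$, that $L$ is generated by $L_1$, finite generation of $G_c/G_{c+1}$) are established in the paper, so the argument is complete as written.
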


\begin{proof}
Let $[\phi] \in H_c$ with $\phi \in G_c$ and suppose that $[\phi]^p.H_{c+1}=0$. Then $\phi^p.G_{c+1}$ lies in $\ker \beta$ and by the exact sequence in Proposition~\ref{p:e} there exists $g \in \g_c$ such that $\phi^p.G_{c+1}=ad(g).G_{c+1}$. As $\phi \in G_c,$ for every $v \in V(\G)$ there exists $w \in \g_{c+1}$ such that $\phi(v)=vw$. Hence $\tau_c(\phi)(v)=\phi(v)v^{-1}.\g_{c+2}=w.\g_{c+2}$. Therefore \begin{align*} [g,v].\g_{c+2} &= ad(g)(v)v^{-1}.\g_{c+2} \\
&=\tau_c(ad(g))(v) &&\text{(definition of $\tau_c$)}\\
&=\tau_c(\phi^p)(v) &&\text{($ad(g).G_{c+1}=\phi^p.G_{c+1}$)} \\                                                                                                                                                                                                                                                                                                                                              
&=\tau_c(\phi)^p(v) &&\text{($\tau_c$ is a homomorphism)}\\
&=w^p.\g_{c+2}.                                                                                                                                                                                                                                                                                                                                                                                                                                                                 \end{align*}
From Proposition~\ref{pp:centres} it follows that there exists $h \in \g_c$ such that $h^p .\g_{c+1}=g.\g_{c+1}$, and applying $ad$ gives $ad(h)^p.G_{c+1}=ad(g).G_{c+1}=\phi^p.G_{c+1}$. As $G_c/G_{c+1}$ is a free abelian group, it has unique roots, so that $ad(h).G_{c+1}=\phi.G_{c+1}$. Hence $[\phi].H_{c+1}=0$, and $H_c/H_{c+1}$ is torsion-free. 
\end{proof}

We now adapt a well-known fact about $\outn$ to $\outg$.

\begin{proposition}
The intersection $\cap_{c=1}^{\infty}H_c$ is trivial.
\end{proposition}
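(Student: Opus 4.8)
The plan is to recast the statement as a \emph{descent} problem for the pro-nilpotent completion of $\A$, and then to use the injectivity of the Magnus map together with Theorem~\ref{c:centres}. Since $\cap_c H_c \subseteq H_1 = \pi(\torg) = \toro$, any class in the intersection has a representative $\phi \in \torg = G_1$. Recalling that $G_c = \ker(\autg \to \aut(\A/\g_{c+1}(\A)))$ and that $\ker\pi = \Inn(\A)$, the condition $[\phi] \in H_c$ says precisely that $\phi$ agrees with some $ad(g_c)$ on $\A/\g_{c+1}(\A)$. Thus $[\phi] \in \cap_c H_c$ if and only if $\phi$ induces an \emph{inner} automorphism on every nilpotent quotient $\A/\g_{c+1}(\A)$, and the goal is to prove that such a $\phi$ is inner, i.e. that $\cap_c(G_c \cdot \Inn(\A)) = \Inn(\A)$.

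First I would treat the centre-free case $Z(\A)=\{1\}$, which is where Theorem~\ref{c:centres} and Lemma~\ref{l:e1} apply. The key preliminary point is that the conjugators $g_c$ can be chosen compatibly. Two elements conjugating $(v_1,\dots,v_n)$ to $(\phi(v_1),\dots,\phi(v_n))$ in $\A/\g_{c+1}(\A)$ differ by a central element of $\A/\g_{c+1}(\A)$; a leading-term computation using $Z(L)=0$ (Theorem~\ref{c:centres}) shows that this centre is exactly the image of $\g_c(\A)$ (if $z$ has leading term in $L_d$ with $d<c$, centrality forces that term into $Z(L)=0$). Hence any valid $g_{c+1}$ automatically satisfies $g_{c+1}\equiv g_c \bmod \g_c(\A)$, so $(g_c)$ is Cauchy for the lower central series topology. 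Applying $\mu\co\A\hookrightarrow\unia$, the units $\mu(g_c)$ converge to some $\hat{g}\in\unia$ with $\mu(\phi(v_i))=\hat{g}\,\mu(v_i)\,\hat{g}^{-1}$ for all $i$, and since $\phi\in\torg$ we may normalise $\hat{g}=1+(\text{higher order terms})$.

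The main obstacle is the final descent from the completion back to $\A$: a priori $\hat{g}$ lives only in $\unia$, and the entire content of the proposition is that an automorphism inner on every nilpotent quotient is genuinely inner. My intended line of attack is to exploit the injectivity of $\mu$ and the explicit control of centralisers in $\env$ that underlies Theorem~\ref{c:centres}: the element $\hat{g}$ normalises $\mu(\A)$, conjugating each $\mu(v_i)$ into $\mu(\A)$, and I would aim to show that the approximants $\mu(g_c)$ are forced to stabilise, so that $\hat{g}=\mu(g)$ for some $g\in\A$ and $\phi = ad(g)$. An alternative route is to fix the generators one at a time: using that $\phi(v_i)$ is conjugate to $v_i$ in every nilpotent quotient, together with the fact that right-angled Artin groups are residually torsion-free nilpotent and conjugacy separable (Minasyan), one conjugates $\phi$ so as to fix each $v_i$ in turn, whence $\phi$ is inner. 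Either way, this is the step where the geometry of $\A$ (rather than formal properties of the filtration) genuinely enters.

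Finally, the general case reduces to the centre-free one. If $Z(\A)=A_{[v]}\neq\{1\}$ then $[v]$ is a central clique, $\G$ is the join $[v]\ast lk[v]$, and $\A\cong\Z^k\times A_{lk[v]}$ with $Z(A_{lk[v]})=\{1\}$; moreover $\g_c(\A)=\g_c(A_{lk[v]})$ for $c\geq 2$. By Proposition~\ref{p:projections2} the transvections and the $\gl(A_{[v]})$ factor act nontrivially on $H_1(\A)$ and hence lie outside $\toro$, so $\toro$ and its filtration $H_c$ are identified with $\overline{\mathcal{T}}(A_{lk[v]})$ and its Andreadakis--Johnson filtration. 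Triviality of $\cap_c H_c$ then follows from the centre-free case applied to $A_{lk[v]}$.
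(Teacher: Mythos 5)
Your main route stalls exactly where you admit it does, and that stall is not a technicality: the descent of $\hat{g}$ from $\unia$ back to $\mu(A_\G)$ is the entire content of the proposition, and nothing in your argument forces the approximants $\mu(g_c)$ to stabilise. A sequence of conjugators that is Cauchy for the lower central series topology converges only in the pro-nilpotent completion, and an element of that completion which conjugates each $\mu(v_i)$ into $\mu(A_\G)$ need not itself lie in $\mu(A_\G)$; no leading-term bookkeeping with $Z(L)=0$ produces this. Your fallback route does not close the gap either, for two reasons. First, from $[\phi]\in H_c$ for all $c$ you only know that $\phi(g)$ and $g$ are conjugate in every \emph{nilpotent} quotient of $A_\G$, so conjugacy separability in the usual sense (detecting non-conjugacy in some finite quotient) is the wrong tool; what is needed is Toinet's theorem \cite{Toinet10} that RAAGs are conjugacy separable in finite $p$-group, hence nilpotent, quotients, which is precisely what upgrades ``conjugate in every nilpotent quotient'' to ``conjugate in $A_\G$''. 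Second, ``conjugate $\phi$ so as to fix each $v_i$ in turn'' is not a valid induction: the conjugator needed for $v_2$ will in general undo what you arranged for $v_1$, and the passage from ``$\phi$ sends every element to a conjugate'' to ``$\phi$ is inner'' is a genuine theorem (Minasyan, Proposition 6.9 of \cite{Minasyan2009}), not a formal argument on generators.

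For comparison, the paper's proof consists of exactly those two citations and nothing else: Toinet gives that $\phi(g)$ is conjugate to $g$ for every $g\in A_\G$, and Minasyan's pointwise-inner result then gives that $\phi$ is inner. In particular no case division on $Z(A_\G)$ is required, and the compatible-conjugator and completion machinery can be discarded; your reduction of the non-centre-free case to $A_{lk[v]}$ is correct but superfluous.
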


\begin{proof}
Let $\phi \in \autg$, and suppose that $[\phi] \in H_c$ for all $c$. Then for every element $g \in A_\G$, we know that $\phi(g)$ is conjugate to $g$ in $A_\G/\g_c$ for all $c$. Toinet has shown that RAAGs are conjugacy separable in finite $p$--group, (and therefore nilpotent) quotients \cite{Toinet10};  this tells us that $\phi(g)$ is conjugate to $g$. Furthermore, Minasyan (\cite{Minasyan2009}, Proposition 6.9) has shown that if $\phi$ takes every element of $A_\G$ to a conjugate, then $\phi$ itself is an inner automorphism. Hence $\cap_{c=1}^{\infty}H_c=\{1\}$.
\end{proof}

Therefore if $Z(A_\G)$ is trivial then $H_1,H_2,H_3\ldots$ is central series of $\toro$, with trivial intersection, such that the consecutive quotients $H_c/H_{c+1}$ are free abelian.

\begin{cor} \label{c:tor}
If $Z(A_\G)$ is trivial then $\toro$ is residually torsion-free nilpotent.
\end{cor}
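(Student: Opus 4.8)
The plan is to read off the corollary directly from the structural facts established immediately beforehand. By the preceding proposition we have $\cap_{c=1}^{\infty} H_c = \{1\}$, the series $H_1 \geq H_2 \geq H_3 \geq \cdots$ is central in $\toro = H_1$, and by the preceding theorem each consecutive quotient $H_c/H_{c+1}$ is free abelian, hence torsion-free. The task is then purely to package these three properties into the definition of residual torsion-free nilpotence.

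First I would fix $c$ and consider the quotient $\toro/H_{c+1}$. Since $H_1/H_{c+1} \geq H_2/H_{c+1} \geq \cdots \geq H_{c+1}/H_{c+1} = \{1\}$ is a finite central series, $\toro/H_{c+1}$ is nilpotent. The one point that genuinely requires an argument is that this nilpotent quotient is in fact \emph{torsion-free}. Here I would invoke the standard lemma that a nilpotent group possessing a central series whose successive factors are all torsion-free is itself torsion-free: arguing by induction on the length of the series, if $x^m = 1$ then its image in the torsion-free top factor is trivial, so $x$ lies in the penultimate term, to which the inductive hypothesis applies. Applying this with the factors $H_i/H_{i+1}$ (free abelian, so torsion-free) shows each $\toro/H_{c+1}$ is torsion-free nilpotent.

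Finally, given any nontrivial $\psi \in \toro$, the equality $\cap_{c=1}^{\infty} H_c = \{1\}$ provides an index $c$ with $\psi \notin H_{c+1}$; the image of $\psi$ in the torsion-free nilpotent group $\toro/H_{c+1}$ is then nontrivial, which is exactly the defining property of residual torsion-free nilpotence. The main obstacle is really only the torsion-free lemma of the middle step, and even that is classical; the rest is bookkeeping assembling the already-proved ingredients, so I expect the proof to be short.
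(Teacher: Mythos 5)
Your proposal is correct and follows exactly the route the paper intends: the corollary is stated as an immediate consequence of the preceding results, namely that $H_1,H_2,H_3,\ldots$ is a central series of $\toro$ with trivial intersection and free abelian (hence torsion-free) successive quotients, and your write-up just supplies the standard packaging (each $\toro/H_{c+1}$ is torsion-free nilpotent, and the trivial intersection separates points). No gaps.
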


\subsection{The situation when $Z(A_\G)$ is nontrivial}

Suppose that $Z(A_\G)$ is nontrivial. Let $[v]$ be the unique maximal equivalence class of vertices in $\G$. By Proposition~\ref{p:projections2}, there is a restriction map $R_v$ and a projection map $P_v$ like so:
\begin{align*} R_v&\co \outg \to \gl(Z(A_\G)) \cong \gl(A_{[v]}) \\
 P_v&\co \outg \to \out(A_\G/Z(A_\G)) \cong \out(A_{lk[v]}).
\end{align*}
The kernel of the map $R_v\times P_v$ is the free abelian subgroup $\text{Tr}$ generated by the transvections $[\r_{ij}]$ such that $v_i \in lk[v]$ and $v_j \in [v]$. Elements of $\outg$ that lie in $\text{Tr}$ act non-trivially on $H_1(A_\G)$, as do elements that are nontrivial under $R_v$. It follows that $\toro$ is mapped isomorphically under $P_v$ onto $\overline{\mathcal{T}}(A_{lk[v]})$. As the centre of $A_{lk[v]}$ is trivial, this lets us promote the above work to any right-angled Artin group:

\begin{theorem} \label{t:tfn}
For any graph $\G$, the group $\overline{\mathcal{T}}(A_\G)$ is residually torsion-free nilpotent.
\end{theorem}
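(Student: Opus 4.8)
The plan is to reduce the general case to the zero-centre case already handled in Corollary~\ref{c:tor} by using the decomposition of $\outg$ provided by Proposition~\ref{p:projections2}. The theorem claims that $\toro$ is residually torsion-free nilpotent for \emph{any} graph $\G$; since Corollary~\ref{c:tor} already gives this whenever $Z(A_\G)=\{1\}$, the only remaining work is to deal with a nontrivial centre and transfer the conclusion across.

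First I would invoke Proposition~\ref{p:projections2}: when $Z(A_\G)=A_{[v]}$ is nontrivial, $[v]$ is the unique maximal (abelian) equivalence class, and there are maps $R_v\co \outg \to \gl(A_{[v]})$ and $P_v\co \outg \to \out(A_{lk[v]})$ whose product has kernel the free abelian group $\text{Tr}$. The key observation is that $P_v$ restricts to an \emph{isomorphism} from $\toro$ onto $\overline{\mathcal{T}}(A_{lk[v]})$. To see this I would argue that $\toro$ meets $\text{Tr}$ and $\ker P_v$ trivially: every transvection $[\r_{ij}]$ generating $\text{Tr}$ acts nontrivially on $A_\G^{ab}=H_1(A_\G)$, and likewise any element with nontrivial image under $R_v$ acts nontrivially on homology. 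Hence the restriction of $P_v$ to the Torelli subgroup is injective, and a check on generators shows its image is exactly the Torelli subgroup of $\out(A_{lk[v]})$. The crucial structural point is that $lk[v]=\G\ssm[v]$ has \emph{trivial} centre: since $[v]$ comprises precisely the central vertices of $\G$, no vertex of $A_{lk[v]}$ is central, so $Z(A_{lk[v]})=\{1\}$.

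With this isomorphism in hand, the theorem follows immediately: Corollary~\ref{c:tor} applies to $A_{lk[v]}$ because its centre is trivial, giving that $\overline{\mathcal{T}}(A_{lk[v]})$ is residually torsion-free nilpotent, and residual torsion-free nilpotence is preserved under isomorphism. Thus $\toro\cong\overline{\mathcal{T}}(A_{lk[v]})$ inherits the property, completing the nontrivial-centre case and hence the theorem for all graphs.

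The main obstacle is not computational but conceptual: one must verify cleanly that $P_v$ maps $\toro$ \emph{onto} all of $\overline{\mathcal{T}}(A_{lk[v]})$ and not merely into it. Injectivity is the easy half, following from the observation that both $\text{Tr}$ and the $R_v$--image detect nontrivial action on $A_\G^{ab}$, so any Torelli element in $\ker P_v$ is trivial. Surjectivity requires tracing the semidirect-product structure of Proposition~\ref{p:projections2} carefully and confirming that every Torelli automorphism of $A_{lk[v]}$ lifts to a Torelli automorphism of $A_\G$ under the splitting; I would handle this by noting that the splitting $\gl(A_{[v]})\times\out(A_{lk[v]})\hookrightarrow\outg$ sends the second factor into elements acting trivially on the $A_{[v]}$ homology summand, so Torelli elements lift to Torelli elements.
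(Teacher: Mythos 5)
Your proposal is correct and follows essentially the same route as the paper: invoke Proposition~\ref{p:projections2}, observe that both $\text{Tr}$ and any element nontrivial under $R_v$ act nontrivially on $A_\G^{ab}$ so that $P_v$ maps $\toro$ isomorphically onto $\overline{\mathcal{T}}(A_{lk[v]})$, note that $Z(A_{lk[v]})$ is trivial, and apply Corollary~\ref{c:tor}. The paper states this in exactly this form (being, if anything, terser than you on the surjectivity of $P_v|_{\toro}$), so there is nothing to add.
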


\section{$\SL$--dimension and projection homomorphisms for subgroups of $\outg$.} \label{s:sldim}

In Section \ref{s:main} we will restrict the actions of certain groups on RAAGs of bounded $\SL$--dimension by using an induction argument based on the number of vertices in $\G$, combined with projection homomorphisms.  Unfortunately our definition of $\SL$--dimension described in the introduction can behave badly under projection, restriction, and inclusion homomorphisms. In particular, if, $v$ is a maximal vertex in a connected graph $\G$, then it is not always true that $d_{SL}(lk[v]) \leq d_{SL}(\G)$. To get round this problem, we define the $\SL$--dimension $\dsl(G)$ of an arbitrary subgroup $G \leq \outg$ and show that if instead we look at the image of $\outg$ under such homomorphisms, then the $\SL$--dimension will not increase (for instance, it will always be the case that $d_{SL}(P_v(\outg)) \leq d_{SL}(\outg)).$

\begin{proposition}
Let $G \leq \outg$ be generated by the subset $T=G\cap\mathcal{S}_\G$ of our standard generating set $\mathcal{S}_\G$ of $\out(A_\G)$. Let $G^0$ be the subgroup of $G$ generated by the extended partial conjugations, inversions, and transvections in $T$ and let ${\rm{PT}}G^0$ be the subgroup of $G$ generated solely by the extended partial conjugations and transvections in $T$. Then $G^0$ and ${\rm{PT}}G^0$ are finite-index normal subgroups of $G$.
\end{proposition}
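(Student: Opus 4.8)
The plan is to exploit two facts recorded in Section~\ref{s:generators}: the graph symmetries form a finite group $\symg$, and the inversions $s_1,\dots,s_n$ pairwise commute and square to the identity, so they generate a finite elementary abelian $2$--group. Write $H$ for the subgroup generated by the graph symmetries in $T$ and $I$ for the subgroup generated by the inversions in $T$; both are finite. Since $T=\mathcal{A}_\G\cap G$ is the disjoint union of its graph symmetries, inversions, extended partial conjugations, and transvections, and $G^0$ is generated by the latter three families while $SG^0$ is generated by the latter two, we have generating decompositions in which $G$ is generated by $SG^0$, $I$ and $H$, and $G^0$ is generated by $SG^0$ and $I$.

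First I would establish normality. Because $G$ is generated by $T$, it suffices to show that conjugating a generator of $G^0$ (resp. $SG^0$) by an element of $T$ lands back in $G^0$ (resp. $SG^0$). Conjugation of a generator of $SG^0$ by an extended partial conjugation or transvection stays in $SG^0$ automatically, so the only cases to examine are conjugation by graph symmetries (for both $G^0$ and $SG^0$) and by inversions (for $SG^0$). Here I would use the explicit formulas: a graph symmetry $\sigma$ sends $s_i\mapsto s_{\sigma(i)}$, $\rho_{ij}\mapsto\rho_{\sigma(i)\sigma(j)}$, and an extended partial conjugation to an extended partial conjugation, so each family is preserved; and conjugation by an inversion sends $\rho_{ij}$ to one of $\rho_{ij}$, $\rho_{ij}^{-1}$ or $\rho_{ij}^{-1}K_{ij}^{-1}$, and sends an extended partial conjugation to its inverse or itself, using that the partial conjugation $K_{ij}$ exists precisely when $\rho_{ij}$ does (as noted in Section~\ref{s:generators}). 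In every case the conjugate is a word in elements of $\mathcal{A}_\G$ that lie in $G$; by the maximality $T=\mathcal{A}_\G\cap G$ those elements lie in $T$, hence in $G^0$ (resp. $SG^0$). This yields $G^0\trianglelefteq G$ and $SG^0\trianglelefteq G$.

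Given normality, finite index follows formally. Since $G^0$ is normal and every generator of $G$ lies in $G^0\cup H$, the subgroup $G^0H$ contains $T$ and therefore equals $G$; hence $G/G^0\cong H/(H\cap G^0)$ is a quotient of the finite group $H$ and is finite. Likewise $SG^0$ is normal in $G^0$ and every generator of $G^0$ lies in $SG^0\cup I$, so $G^0=SG^0I$ and $G^0/SG^0\cong I/(I\cap SG^0)$ is finite. Multiplying indices shows $SG^0$ is of finite index in $G$ as well.

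The main obstacle is the normality step, and specifically the behaviour of transvections and extended partial conjugations under conjugation by inversions: unlike the symmetry case, the naive conjugate need not literally be one of the listed generators, and one must check it is nonetheless a product of extended partial conjugations and transvections. It is precisely the maximality assumption $T=\mathcal{A}_\G\cap G$ that lets us then conclude these conjugates already belong to $T$, closing the argument; without maximality the statement would fail.
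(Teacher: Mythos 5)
Your proof is correct and takes essentially the same route as the paper: the same explicit conjugation formulas for graph symmetries and inversions, the same appeal to the maximality $T=\mathcal{A}_\G\cap G$ to keep the conjugates inside $T$, and finiteness of the symmetry and inversion subgroups to deduce finite index. The only slip is the parenthetical claim that $K_{ij}$ exists ``precisely when $\rho_{ij}$ does'' --- in fact $K_{ij}$ requires $[v_i,v_j]\neq 0$ while $\rho_{ij}$ requires $lk(v_i)\subset st(v_j)$ --- but this is harmless, since $K_{il}$ enters the conjugation formula only in the case $[v_i,v_l]\neq 0$, where it is defined.
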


\begin{proof}
Suppose $\alpha$ is a graph symmetry that moves the vertices according to the permutation $\sigma$. We find that $\alpha K_{ij}\alpha^{-1}=K_{\sigma(i)\sigma(j)}$, $\alpha \r_{ij} \alpha^{-1}=\r_{\sigma(i)\sigma(j)}$ and $\alpha s_i \alpha^{-1}=\alpha_{\sigma(i)}$. As $T=G \cap \mathcal{S}_\G$, if $[\phi]$ and $[\alpha]$ belong to $T$, then so does $[\alpha \phi \alpha^{-1}]$. Therefore if $W$ is a word in $T \cup T^{-1}$ one may shuffle graph symmetries along so that they all occur at the beginning of $W$. As the group of graph symmetries is finite, this shows that $G^0$ is finite index in $G$, and the above computations verify that $G^0$ is normal in $G$. Similarly, with inversions one verifies that: $$ \r_{kl}s_i=\begin{cases}  s_i\r_{kl} & i \neq k,l \\
                                                                                                                                       s_i\r_{ki}^{-1} & i=l \\
s_iK_{il}^{-1}\r_{il}^{-1} & i=k,[v_i,v_l]\neq0 \\
s_i\r_{il}^{-1} & i=k,[v_i,v_l]=0\end{cases} \text{and} \quad  K_{kl}s_i=\begin{cases} s_iK_{kl} & i \neq l \\ s_iK_{kl}^{-1} & i=l \end{cases}.$$
These show that ${\rm{PT}}G^0$ is normal in $G^0$, and we may write any element of $G^0$ in the form $[s_1^{\epsilon_1}\ldots s_n^{\epsilon_n}\phi']$, where $\epsilon_i \in \{0,1\}$ and $\phi'$ is a product of extended partial conjugations and transvections. Therefore ${\rm{PT}}G^0$ is of index at most $2^n$ in $G^0$.
\end{proof}

Now lets look at the generators of $\autg$ (respectively $\outg$) under the map $\Phi: \autg \to \gln$ (respectively $\overline{\Phi}:\outg \to \gln$). If $\alpha$ is a graph symmetry, then $\Phi(\alpha)$ is the appropriate permutation matrix corresponding to the permutation $\alpha$ induces on the vertices. For a partial conjugation $K_{ij}$ we see that $\Phi(K_{ij})$ is the identity matrix $I$; $\Phi$ sends the inversion $s_i$ to the matrix $S_i$ which has $1$ everywhere on the diagonal except for $-1$ at the $(i,i)th$ position, and zeroes everywhere else, and $\Phi$ sends the transvection $\r_{ij}$ to the matrix $T_{ji}=I + E_{ji}$, where $E_{ji}$ is the elementary matrix with $1$ in the $(i,j)$th position, and zeroes everywhere else. The swapping between $\r_{ij}$ and $T_{ji}$ may seem a little unnatural, but occurs as a choice of having $\autg$ act on the left.  It follows that the image of $\saut$ under $\Phi$ is the subgroup of $\gln$ generated by matrices of the form $T_{ij}$, where $v_j \leq v_i$. 
After restricting our attention to $G^0$ and ${\rm{PT}}G^0$ our main tool for studying the image of $\outg$ and its subgroups under $\overline{\Phi}$ will be by ordering the vertices of $\G$ in the manner described in Sections \ref{s:relation} and \ref{s:gordering}. If we order the vertices as in Section \ref{s:relation}, then $[\r_{ij}] \in \outg$ only if either $v_i$ and $v_j$ are in the same equivalence class of vertices, or $i \leq j$. It follows that a matrix in the image of $\overline{\Phi}|_{\outo}$ has a block decomposition of the form:
 $$ M=\begin{pmatrix}M_1 & 0 & \dots & 0 \\
* & M_2 & \dots & 0 \\
\hdotsfor{4} \\
* & * & \dots & M_{r} 
 \end{pmatrix},$$
 
where the $*$ in the $(i,j)$th entry in the block decomposition may be non-zero if $[v_{m_j}]\leq[v_{m_i}]$, but zero otherwise. 

Similarly, given a subgroup $G\leq \outg$ generated by a subset of $\mathcal{S}_\G$, if we order the vertices by the method given in Section~\ref{s:gordering}, a matrix in the image of $\overline{\Phi}|_{G^0}$ has a block decomposition of the form:
$$ M=\begin{pmatrix}N_1 & 0 & \dots & 0 \\
* & N_2 & \dots & 0 \\
\hdotsfor{4} \\
* & * & \dots & N_{r'} 
 \end{pmatrix},$$
 
 where the $*$ in the $(i,j)$th entry in the block decomposition may be non-zero if $[v_{l_j}]_G\leq[v_{l_i}]_G$, but zero otherwise. If $[v]_G$ is an abelian equivalence class of vertices, then $G$ contains a copy $\SL(A_{[v]_G})$ generated by the $[\r_{ij}]$ with $v_i,v_j \in [v]$. This fact lends itself to the following definition:
 
\begin{definition}
For any (not necessarily finitely generated) subgroup $G \leq \outg$, the $\SL$--dimension of $G$, written $d_{SL}(G)$, is defined to be the size of the largest abelian equivalence class under $\sim_G$. If there are no abelian equivalence classes under $\sim_G$, define $d_{SL}(G)=1$.
\end{definition}

\begin{rem}
A result of Droms \cite{MR891135} tells us that $\G$ is determined by $A_\G$, however $\G$ is not determined by $\outg$. Hence the definition of $\dsl(G)$ depends on our choice of $\G$ and the embedding $G \leq \outg$, as the definition of the equivalence relation $\sim_G$ relies on our chosen generating set $\mathcal{S}_\G$. This should not cause confusion in the work that follows -- we will only study subgroups $G \leq \outg$ generated by subsets of $\mathcal{S}_\G$.
\end{rem}

Roughly speaking, $\dsl(G)$ is the largest integer such that $G$ contains an obvious copy of $SL_{d_{SL}(G)}(\mathbb{Z})$. Note that $d_{SL}(\outg)$ is simply the size of the largest abelian equivalence class under the relation $\leq$ defined on the vertices, so is equal to $\dsl(\G)$. As each abelian equivalence class of vertices is a clique in $\G$, the $\SL$--dimension of $\outg$ is less than or equal to the size of a maximal clique in $\G$ (this is known as the \emph{dimension of} $A_\G$). We can now look at how $G$ and its $\SL$--dimension behave under restriction, exclusion, and projection maps.

\begin{lemma}\label{l:dimr}
Let $G \leq \outg$ be generated by $T=G\cap\mathcal{S}_\G$. Suppose that $\G'$ is a full subgraph of $\G$ and the conjugacy class of  $A_{\G'}$ in $A_\G$ is preserved by $G$. Then under the restriction map $R_{\G'}$,  the group $R_{\G'}(G)\leq \out(A_{\G'})$ is generated by a subset of $\mathcal{S}_{\G'}$, and $\dsl(R_{\G'}(G))\leq \dsl(G)$.
\end{lemma}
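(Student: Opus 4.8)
The plan is to prove the two assertions separately, handling the identification of the generating set first and the dimension bound second, the latter via the action on homology.

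For the first assertion, I would run a case analysis over the generator types in $\mathcal{A}_\G\cap G$, checking that the restriction of each lands in $\mathcal{A}_{\G'}\cup\{1\}$. A graph symmetry with permutation $\sigma$ sends $A_{\G'}$ to $A_{\sigma\G'}$; since the hypothesis is that the conjugacy class of $A_{\G'}$ is preserved and standard subgroups on distinct vertex sets are non-conjugate, $\sigma$ fixes $V(\G')$ setwise and the restriction is a graph symmetry of $\G'$. An inversion $s_i$ restricts to $s_i$ if $v_i\in\G'$ and to the identity otherwise. A transvection $\r_{ij}$ with $v_i,v_j\in\G'$ restricts to $\r_{ij}$ on $A_{\G'}$, and this is a legitimate transvection because $\G'$ is a full subgraph, so $lk_{\G'}(v_i)=lk_\G(v_i)\cap\G'\subset st_\G(v_j)\cap\G'=st_{\G'}(v_j)$; transvections with an endpoint outside $\G'$ either fail to preserve the conjugacy class of $A_{\G'}$ (so do not occur in $G$) or restrict to the identity. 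The one genuinely interesting case is a partial conjugation $K_{ij}$: here the single component $\G_{ij}$ of $\G-st(v_j)$ may meet $\G'$ in several components of $\G'-st_{\G'}(v_j)$, each of which is conjugated by the same letter $v_j$, so $R_{\G'}(K_{ij})$ is precisely an \emph{extended} partial conjugation of $\G'$ (or the identity, when $v_j\notin\G'$ and $\G'\subset\G_{ij}$, as one may then correct by $ad(v_j^{-1})$). This is exactly the reason the enlarged generating set $\mathcal{A}_{\G'}$ is needed. Since every generator restricts into $\mathcal{A}_{\G'}\cup\{1\}$, the image $R_{\G'}(G)$ is generated by a subset of $\mathcal{A}_{\G'}$.

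For the dimension bound I would reduce to a comparison of the preorders, namely the claim that for $v_i,v_j\in V(\G')$ the relation $v_i\leq_H v_j$ (i.e. the transvection $\r_{ij}$ of $A_{\G'}$ lies in $H:=R_{\G'}(G)$) implies $v_i\leq_G v_j$. Granting this, $\sim_H$ refines $\sim_G$ on $V(\G')$, so any abelian $\sim_H$-class $C$ is contained in a single $\sim_G$-class; as $\G'$ is full, $C$ is a clique in $\G$ with $|C|\geq 2$, which forces the containing $\sim_G$-class to be abelian, whence $|C|\leq\dsl(G)$ and $\dsl(H)\leq\dsl(G)$. To prove the claim I would pass to the homology action. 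Because $G$ preserves the conjugacy class of $A_{\G'}$ and inner automorphisms act trivially on homology, $\overline{\Phi}(G)$ preserves the summand $H_1(A_{\G'})\subset H_1(A_\G)$, so there is a commutative square
$$\begin{CD} G @>{R_{\G'}}>> \out(A_{\G'}) \\ @V{\overline{\Phi}}VV @VV{\overline{\Phi}'}V \\ \gln @>{r}>> \gl_{n'}(\Z) \end{CD}$$
in which $r$ extracts the $\G'$-block and $n'=|V(\G')|$. If $\r_{ij}\in H$ then its image $I+E_{ji}$ lies in $\overline{\Phi}'(H)=r(\overline{\Phi}(G))$, so some matrix of $\overline{\Phi}(G)$ has a nonzero $(j,i)$ off-diagonal entry.

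To convert a nonzero off-diagonal entry into an inequality in $\leq_G$, I would work inside $G^0$ (finite index, and giving the same preorder since all transvections of $G$ already lie among its generators), where the block decomposition established before the lemma shows $\overline{\Phi}(G^0)$ is block lower-triangular for the $G$-ordering, with the $(j,i)$ entry allowed to be nonzero only when $[v_i]_G\leq[v_j]_G$. The point is that extended partial conjugations lie in $\torg$ and hence map to the identity in $\gln$, so the only generators contributing off-diagonal entries are transvections; thus a transvection appearing in $H$ cannot be manufactured ``accidentally'' from partial conjugations, and the off-diagonal entry it produces respects the triangular pattern. This yields $v_i\leq_G v_j$, completing the claim and hence the bound. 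I expect the main obstacle to be the first part rather than the second: pinning down the restrictions of partial conjugations and of boundary-crossing transvections (showing they are genuinely extended partial conjugations, inversions, or trivial, and that the conjugacy-class hypothesis excludes the remaining cases) requires the careful bookkeeping of how $\G_{ij}$ and $st(v_j)$ intersect the full subgraph $\G'$, whereas the homological argument for the dimension bound is essentially forced once the triangular structure from Section~\ref{s:gordering} is in hand.
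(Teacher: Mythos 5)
Your proposal is correct and follows essentially the same route as the paper: a case analysis of the generators under restriction (with extended partial conjugations absorbing the splitting of $\G_{ij}$ into several components of $\G'-st(v_j)$), followed by the observation that the action of $R_{\G'}(G)^0$ on $H_1(A_{\G'})$ is obtained by deleting rows and columns from the block-lower-triangular decomposition of $\overline{\Phi}(G^0)$. The only real difference is that you spell out explicitly why no new transvections (hence no larger abelian equivalence classes) can arise in the image, a point the paper's proof leaves implicit.
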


\begin{proof}
One first checks that for an element $[\phi] \in T$, either $R_{\G'}([\phi])$ is trivial or $R_{\G'}([\phi]) \in \mathcal{S}_{\G'}$. This is obvious in the case of graph symmetries, inversions, and transvections. In the case of partial conjugations if $v_j$ is not in $\G'$, or if $\G_{ij}\cap\G'=\emptyset$, then $R_{\G'}([K_{ij}])$ is trivial. Otherwise, $\G_{ij} \cap \G'$ is a union of connected components of $\G' - st(v_j)$, so that $R_{\G'}([K_{ij}])$ is an extended partial conjugation of $A_{\G'}$. This proves the first part of the lemma. To prove the second part of the lemma, we first give an alternate definition of $\dsl(G)$ when $G$ is generated by $T=G\cap\mathcal{S}_\G$. Elements in the image of $G^0$ under $\overline{\Phi}$ are of the form:
\begin{equation} \label{matrix} M=\begin{pmatrix}N_1 & 0 & \dots & 0 \\
* & N_2 & \dots & 0 \\
\hdotsfor{4} \\
* & * & \dots & N_{r'} 
 \end{pmatrix}, \end{equation} 
where each $N_i$ is an invertible matrix of size $l_{i+1}-l_i.$ Each of the blocks is associated to either an abelian or non-abelian equivalence class in $\sim_G$, so $\dsl(G)$ is the size of the largest diagonal block in this decomposition associated to an abelian equivalence class (or equal to 1 if there are no abelian equivalence classes). The action of $R_{\G'}(G)^0$ on $H_1(A_{\G'})$ is obtained by removing rows and columns from the decomposition given in Equation \eqref{matrix}, and this gives the block decomposition associated to $\sim_{R_{\G'}(G)}$. It follows that the equivalence classes of $\sim_{R_{\G'}(G)}$ are subsets of the equivalence classes of $\sim_G$. Any abelian equivalence class in $\sim_{R_{\G'}(G)}$ containing at least two vertices will then be a subset of an abelian equivalence class in $\sim_G$. Hence $\dsl(R_{\G'}(G))\leq \dsl(G)$.
\end{proof} 

The following lemma is shown in the same way:

\begin{lemma}\label{l:dime}
Let $G \leq \outg$ be generated by $T=G\cap\mathcal{S}_\G$. Let $\G'$ be a full subgraph of $\G$. Suppose that the normal subgroup  generated by $A_{\G'}$ in $A_\G$ is preserved by $G$. Then under the exclusion map $E_{\G'}$,  the group $E_{\G'}(G) \leq \out(A_{\G-\G'})$ is generated by a subset of $\mathcal{S}_{\G-\G'}$, and $\dsl(E_{\G'}(G))\leq \dsl(G)$.
\end{lemma}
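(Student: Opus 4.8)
The plan is to follow the two-part structure of the proof of Lemma~\ref{l:dimr}, replacing the restriction by the quotient construction throughout. The essential difference is that the exclusion map is induced by passing to $A_\G/\langle\langle A_{\G'}\rangle\rangle \cong A_{\G-\G'}$, which sets every generator lying in $\G'$ equal to $1$, whereas the restriction map retained the generators in $\G'$ and discarded the rest. So the case analysis of generators is dual to the one already carried out, and the $\SL$--dimension bound again reduces to the observation that the induced action on homology deletes rows and columns of the matrix in Equation~\eqref{matrix}.

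For the first assertion I would run through the generating set $T$ and check that each $E_{\G'}([\phi])$ is either trivial or lies in $\mathcal{A}_{\G-\G'}$. An inversion $s_i$ or a transvection $\r_{ij}$ whose moved vertex $v_i$ lies in $\G'$ becomes trivial; a transvection $\r_{ij}$ with $v_i\notin\G'$ but $v_j\in\G'$ is also killed, since $v_j\mapsto 1$; and the remaining inversions and transvections map to the corresponding generators of $A_{\G-\G'}$. Graph symmetries in $T$ preserve $\langle\langle A_{\G'}\rangle\rangle$, hence permute the vertices of $\G'$ among themselves, so they descend to graph symmetries of $\G-\G'$. As in the restriction case the only interesting generator is a partial conjugation $K_{ij}$: if $v_j\in\G'$ then conjugation by $v_j$ becomes conjugation by $1$ and $E_{\G'}([K_{ij}])$ is trivial, while if $v_j\notin\G'$ the surviving conjugated set $\G_{ij}\cap(\G-\G')$ is a union of connected components of $(\G-\G')-st(v_j)$, so $E_{\G'}([K_{ij}])$ is an extended partial conjugation of $A_{\G-\G'}$ and hence lies in $\mathcal{A}_{\G-\G'}$.

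For the bound on $\SL$--dimension I would again use the characterisation of $\dsl$ as the size of the largest diagonal block of the decomposition in Equation~\eqref{matrix} attached to an abelian equivalence class. Because $G$ preserves the normal closure of $A_{\G'}$, every matrix in $\overline{\Phi}(G^0)$ preserves the coordinate subgroup of $A_\G^{ab}$ generated by the images of the vertices of $\G'$; the induced action on the quotient $A_{\G-\G'}^{ab}$ is therefore read off by deleting exactly the rows and columns of \eqref{matrix} indexed by $V(\G')$. Deletion cannot create new off-diagonal entries and can only shrink the diagonal blocks, and it sends abelian classes to abelian classes, so the largest abelian diagonal block of $\overline{\Phi}(E_{\G'}(G)^0)$ has size at most $\dsl(G)$, giving $\dsl(E_{\G'}(G))\leq\dsl(G)$.

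The step requiring the most care is the bookkeeping in the previous paragraph, and precisely the point that deletion cannot enlarge an abelian block: one must verify that the $\sim_{E_{\G'}(G)}$--equivalence classes on $\G-\G'$ are contained in the $\sim_G$--classes, so that two abelian $\sim_G$--classes cannot amalgamate into a single larger one in the image. This follows from the block lower-triangular form of \eqref{matrix}, since $[\r_{ij}]\in E_{\G'}(G)$ forces the $(v_i,v_j)$ off-diagonal entry into an allowed position, and the presence of both $[\r_{ij}]$ and $[\r_{ji}]$ then forces $v_i$ and $v_j$ into a common $\sim_G$--class. The remaining verifications are routine and mirror Lemma~\ref{l:dimr} exactly.
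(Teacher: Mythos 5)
Your proof is correct and is exactly the argument the paper intends: the paper proves Lemma~\ref{l:dime} only by remarking that it ``is shown in the same way'' as Lemma~\ref{l:dimr}, and your generator-by-generator analysis of the quotient map together with the row/column-deletion argument for the block decomposition in Equation~\eqref{matrix} (including the check that $\sim_{E_{\G'}(G)}$--classes cannot amalgamate abelian $\sim_G$--classes) supplies precisely the omitted details. One cosmetic point: the statement's ``$\mathcal{A}_{\G'}$'' is a typo for $\mathcal{A}_{\G-\G'}$, which you have silently and correctly fixed.
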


As projection maps are obtained by the concatenation of a restriction and an exclusion map, combining the previous two lemmas gives the following:

\begin{proposition}\label{p:dimp}
Let $G \leq \outg$ be generated by $T=G\cap\mathcal{S}_\G$. Suppose that $\G$ is connected and $v$ is a maximal vertex of $\G$. Under the projection homomorphism $P_v$ of Example \ref{e:maps}, the group $P_v(G^0)\leq \out(A_{lk[v]})$ is generated by a subset of $\mathcal{S}_{lk[v]}$ and $\dsl(P_v(G^0))\leq \dsl(G)=\dsl(G^0)$.
\end{proposition}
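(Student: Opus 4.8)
The plan is to realise $P_v$ explicitly as a restriction map followed by an exclusion map, and then to feed the two pieces through Lemmas \ref{l:dimr} and \ref{l:dime} in turn. Recall from Example \ref{e:maps} that $G^0 \leq \outo$ (extended partial conjugations, inversions, and transvections all lie in $\outo$), so $P_v$ is defined on $G^0$, and that $P_v$ is built by first restricting to $A_{st[v]}$ and then excluding the equivalence class $[v]$. Since $[v]\cap lk[v]=\emptyset$ and $st[v]=[v]\cup lk[v]$, we have $st[v]-[v]=lk[v]$, so this expresses $P_v=E_{[v]}\circ R_{st[v]}$, where $R_{st[v]}\co G^0 \to \out^0(A_{st[v]})$ is the restriction to the full subgraph $st[v]$ and $E_{[v]}$ is the exclusion of $[v]$ computed inside the right-angled Artin group $A_{st[v]}$.

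First I would dispose of the equality $\dsl(G)=\dsl(G^0)$. By definition $\dsl$ depends only on the relation $\sim_G$, which in turn is determined solely by which transvections $[\r_{ij}]$ belong to the group. As $T=\mathcal{A}_\G\cap G$ is maximal, a transvection lies in $G$ if and only if it lies in $T$, if and only if it lies in $G^0$; hence $\sim_G$ and $\sim_{G^0}$ agree and the two $\SL$--dimensions coincide. For the restriction step I would apply Lemma \ref{l:dimr} to the full subgraph $st[v]$, whose conjugacy class is preserved by $\outo$ (hence by $G^0$) by Example \ref{e:maps}. This yields that $H:=R_{st[v]}(G^0)$ is generated by a subset of $\mathcal{A}_{st[v]}$ and that $\dsl(H)\leq\dsl(G^0)$. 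For the exclusion step I would apply Lemma \ref{l:dime}, now with ambient group $A_{st[v]}$ and excluded subgraph $[v]$: the normal closure of $A_{[v]}$ in $A_{st[v]}$ is preserved by $H$ because the conjugacy class of $A_{[v]}$ is preserved by $\outo$. This gives that $P_v(G^0)=E_{[v]}(H)$ is generated by a subset of $\mathcal{A}_{lk[v]}$ and that $\dsl(E_{[v]}(H))\leq\dsl(H)$. Chaining the inequalities gives $\dsl(P_v(G^0))\leq\dsl(G^0)=\dsl(G)$, as required.

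The one point demanding genuine care — and the main obstacle — is the legitimacy of applying Lemma \ref{l:dime} to the intermediate group $H$. The two dimension lemmas were stated for subgroups of $\outg$ generated by subsets of $\mathcal{A}_\G$, whereas here $H$ is a subgroup of $\out^0(A_{st[v]})$. I must therefore observe that the proofs of Lemmas \ref{l:dimr} and \ref{l:dime} make no use of $\G$ being the ambient graph and so transfer verbatim to the right-angled Artin group $A_{st[v]}$, with $H$ (which is generated by a subset of $\mathcal{A}_{st[v]}$ precisely by the restriction step) playing the role of $G$. Once this generality is acknowledged, the argument is a purely formal composition and no further computation is needed.
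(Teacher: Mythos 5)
Your proposal is correct and matches the paper's argument: the paper proves this proposition with the single remark that a projection is the concatenation of a restriction and an exclusion, so Lemmas \ref{l:dimr} and \ref{l:dime} combine. You simply make explicit the details the paper leaves implicit (the factorisation $P_v=E_{[v]}\circ R_{st[v]}$, the equality $\dsl(G)=\dsl(G^0)$ via maximality of $T$, and the observation that the two lemmas apply verbatim with $A_{st[v]}$ as the ambient group).
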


\section{Proof of Theorem \ref{t:main}}\label{s:main}

Suppose that a group $\Lambda$ admits no surjective homomorphisms to $\mathbb{Z}$, so that $\hom(\Lambda,\mathbb{Z})=0$. If a group $H$ satisfies the property: 
                                                                                                                                                                   \begin{quote}$(\ast)$ \emph{For every nontrivial subgroup $H'\leq H$ there exists a surjective homomorphism $H'\to\mathbb{Z}$.}\end{quote}

Then $\hom(\Lambda,H)=0$, also. For instance, a simple induction argument on nilpotency class gives the following:

\begin{proposition}\label{p:nilpotent}
Suppose that $\hom(\Lambda,\mathbb{Z})=0$ and $H$ is a finitely generated torsion-free nilpotent group. Then every homomorphism $f:\Lambda \to H$ is trivial.
\end{proposition}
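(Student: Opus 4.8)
The plan is to induct on the nilpotency class $c$ of $G$; the hypothesis $\hom(\Lambda,\mathbb{Z})=0$ is tailored to dispatch the abelian base case, and the induction works by peeling off a central copy of $\mathbb{Z}^s$ at each stage. Throughout I would freely use that subgroups and quotients of finitely generated nilpotent groups are again finitely generated nilpotent, and that a finitely generated torsion-free abelian group is free abelian. For the base case $c\le 1$ the group $G$ is finitely generated torsion-free abelian, hence $G\cong\mathbb{Z}^r$; a homomorphism $f\co\Lambda\to\mathbb{Z}^r$ is the same datum as its $r$ coordinate homomorphisms $\Lambda\to\mathbb{Z}$, each of which lies in $\hom(\Lambda,\mathbb{Z})=0$, so $f$ is trivial.

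For the inductive step I would assume the statement for all finitely generated torsion-free nilpotent groups of class $<c$ and take $G$ of class $c\ge 2$. Set $Z=Z(G)$. Since $G$ is a nontrivial nilpotent group its centre is nontrivial, and as a subgroup of the finitely generated torsion-free group $G$ it is finitely generated and torsion-free, so $Z\cong\mathbb{Z}^s$ with $s\ge 1$. Granting for the moment that $G/Z$ is torsion-free, it is finitely generated torsion-free nilpotent of class at most $c-1$, so the inductive hypothesis gives $\hom(\Lambda,G/Z)=0$; composing $f$ with the quotient $G\to G/Z$ then forces $f(\Lambda)\subseteq Z\cong\mathbb{Z}^s$, and a second application of the base case shows $f$ is trivial. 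This closes the induction.

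The step requiring genuine care, and the main obstacle, is the claim that $G/Z$ is torsion-free; this is precisely where torsion-freeness of $G$ (not merely nilpotency) is exploited. One cannot simply quotient by a lower-central term, because a quotient of a torsion-free nilpotent group need not be torsion-free: the class-two group $\langle x,y,z \mid z\text{ central},\ [x,y]=z^2\rangle$ is torsion-free, yet modulo its commutator subgroup $\langle z^2\rangle$ it acquires a factor $\mathbb{Z}/2$. What rescues the argument is quotienting by the \emph{centre}: in a torsion-free nilpotent group $Z(G)$ is \emph{isolated}, meaning $g^n\in Z(G)$ forces $g\in Z(G)$. This is a consequence of the standard fact (due to Mal'cev) that in a torsion-free nilpotent group $[g^n,h]=1$ implies $[g,h]=1$; applying it over all $h$ shows that any root of a central element is itself central, so $Z(G)$ is isolated and $G/Z(G)$ is torsion-free.

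Alternatively, I could bypass the induction and invoke the general principle recorded just before the statement. It suffices to check that $G$ has the property that each of its subgroups surjects onto $\mathbb{Z}$: every subgroup of $G$ is again finitely generated torsion-free nilpotent, and any nontrivial such group is infinite, so its abelianisation is an infinite finitely generated abelian group, hence has positive free rank and surjects onto $\mathbb{Z}$. The same Mal'cev-type care is hidden here in identifying which structural fact guarantees the abelianisation is infinite, but this route makes the dependence on the quoted principle completely explicit.
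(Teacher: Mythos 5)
Your proof is correct and follows exactly the route the paper indicates: the paper offers no written argument beyond the remark that ``a simple induction argument on nilpotency class'' gives the proposition, and your induction on class, peeling off the centre at each stage, is precisely that argument. You also correctly identify and resolve the one genuine subtlety --- that $Z(G)$ is isolated in a torsion-free nilpotent group, so $G/Z(G)$ is again torsion-free --- which the paper leaves implicit.
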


By Theorem \ref{t:tfn} we know that $\overline{\mathcal{T}}(A_\G)$ is residually torsion-free nilpotent. Combining this with Proposition \ref{p:nilpotent} gives:

\begin{proposition}\label{p:tor}
Suppose that $\hom(\Lambda,\mathbb{Z})=0$ and $f:\Lambda \to \toro$ is a homomorphism. Then $f$ is trivial.
\end{proposition}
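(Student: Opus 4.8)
The plan is to reduce the statement to the residual torsion-free nilpotence of $\toro$ established in Theorem~\ref{t:tfn}, and then apply Proposition~\ref{p:nilpotent} to each nilpotent quotient. Concretely, suppose $f\co\Lambda \to \toro$ is a homomorphism with $\hom(\Lambda,\mathbb{Z})=0$. I want to show $f$ is trivial, i.e.\ that $f(\Lambda)=\{1\}$. Since $\toro$ is residually torsion-free nilpotent, for every nontrivial element $x \in \toro$ there is a surjection $q\co \toro \to Q$ onto a torsion-free nilpotent group $Q$ with $q(x)\neq 1$. The group $Q$ is finitely generated: indeed, the standard quotients realising residual torsion-free nilpotence are the quotients $\toro/N_k$ by the terms of a suitable central series with torsion-free abelian factors $H_c/H_{c+1}$ (in the notation of the excerpt), and these are finitely generated since $\toro$ is finitely generated. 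Thus $Q$ satisfies the hypotheses of Proposition~\ref{p:nilpotent}.

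First I would fix such an $x = f(\lambda)$ for some $\lambda \in \Lambda$, assume for contradiction that $x \neq 1$, and choose the corresponding $q\co\toro \to Q$ with $q(x)\neq 1$. Then the composite $q \circ f\co \Lambda \to Q$ is a homomorphism from $\Lambda$ to a finitely generated torsion-free nilpotent group. Since $\hom(\Lambda,\mathbb{Z})=0$, Proposition~\ref{p:nilpotent} forces $q\circ f$ to be trivial. But $(q\circ f)(\lambda)=q(x)\neq 1$, a contradiction. Hence $f(\lambda)=1$ for every $\lambda$, so $f$ is trivial.

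The main point to get right is the finite generation of the nilpotent quotients that detect nontrivial elements, since Proposition~\ref{p:nilpotent} requires it. This is not really an obstacle: $\toro$ is finitely generated (its image under $\overline{\Phi}$ is a quotient of the finitely generated $\torg$, which is generated by $\mathcal{M}_\G$), so any quotient of $\toro$ is finitely generated, and in particular the torsion-free nilpotent quotients arising in Theorem~\ref{t:tfn} are. I would state this reduction cleanly as a one-line observation that residual torsion-free nilpotence together with finite generation is exactly the class of groups $H$ for which "every nontrivial element survives in a finitely generated torsion-free nilpotent quotient" holds, so the hypothesis $\hom(\Lambda,\mathbb{Z})=0$ kills all such quotients and therefore kills $f$ itself. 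The argument is short precisely because the substantive work is already packaged in Theorem~\ref{t:tfn} and Proposition~\ref{p:nilpotent}.
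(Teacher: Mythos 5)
Your argument is correct and is exactly the route the paper takes: it states Proposition~\ref{p:tor} as an immediate consequence of Theorem~\ref{t:tfn} together with Proposition~\ref{p:nilpotent}, and your write-up just makes explicit the standard ``detect a nontrivial image in a finitely generated torsion-free nilpotent quotient'' step, including the needed finite generation of the quotients (one small wording slip: $\toro$ is the \emph{kernel} of $\overline{\Phi}$, and it is finitely generated because it is the image of the finitely generated group $\torg$ under the projection $\autg\to\outg$, not under $\overline{\Phi}$).
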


\begin{proof}
Suppose that $f:\Lambda \to \toro$ is nontrivial. Pick $x \in \Lambda$ with $f(x) \neq 1$. As $\toro$ is residually torsion-free nilpotent there exists a homomorphism $g:\toro \to H$ onto a torsion-free nilpotent group $H$ with $g(f(x)) \neq 1$. As $\toro$ is finitely generated, so is $H$. Then $gf:\Lambda \to H$ is nontrivial, contradicting Proposition~\ref{p:nilpotent}.  
\end{proof}

The overriding theme here is that we may build homomorphism rigidity results from weaker criteria by carefully studying a group's subgroups and quotients. This is also the flavour of our main theorem: 

\begin{theorem}\label{t:main} Suppose that $G$ is a subgroup of $\outg$ generated by a subset $T \subset \mathcal{S}_\G$ and $\dsl(G) \leq m$. Let $$F(\G')=\max\{|V(\G')|:\G' \subset \G \text{ and $A_{\G'}\leq A_\G$ is a free group}\}.$$ Let $\Lambda$ be a group. Suppose that for each finite index subgroup $\Lambda'\leq\Lambda$, we have: \begin{itemize}
                                                                                                                                           \item Every homomorphism $\Lambda' \to \slm$ has finite image, 
\item For all $N \leq F(\G)$, every homomorphism $\Lambda' \to \out(F_N)$ has finite image.
\item $\hom(\Lambda',\mathbb{Z})=0$.                                                                                                                                   \end{itemize}
Then every homomorphism $f:\Lambda \to G$ has finite image.\end{theorem}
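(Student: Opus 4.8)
The plan is to induct on the number of vertices $n=|V(\G)|$, using the restriction, exclusion, and projection homomorphisms of Section~\ref{s:rep} to reduce the action of $\Lambda$ on $G\leq\outg$ to actions on automorphism groups of RAAGs defined by smaller graphs, on copies of $\slm$, and on copies of $\out(F_N)$ with $N\leq F(\G)$. Since the hypotheses are imposed on \emph{every} finite index subgroup $\Lambda'\leq\Lambda$, and every finite index subgroup of such a $\Lambda'$ is again finite index in $\Lambda$, at each stage I am free to replace $\Lambda$ by any finite index subgroup; to conclude that $f$ has finite image it then suffices to show $f$ is trivial on some finite index subgroup. I note that the three hypotheses transfer verbatim to any full subgraph $\G'$: the conditions on $\slm$ and on $\mathbb{Z}$ are unchanged, and $F(\G')\leq F(\G)$ since a discrete full subgraph of $\G'$ is one of $\G$, so the condition on $\out(F_N)$ is inherited. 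By Lemmas~\ref{l:dimr} and~\ref{l:dime} and Proposition~\ref{p:dimp} the relevant images are again generated by subsets of the appropriate $\mathcal{A}_{\G'}$ and have $\SL$--dimension at most $m$, so the inductive hypothesis will apply. As a first reduction I replace $\Lambda$ by $f^{-1}(G^0)$ and $G$ by $G^0$, which is legitimate as $G^0$ is finite index in $G$; from now on $G\leq\outo$ is generated by extended partial conjugations, inversions and transvections.

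The base case is the complete graph, where $A_\G=\mathbb{Z}^n$, $\outg=\gln$, $\toro$ is trivial and $\overline{\Phi}$ is an isomorphism. After passing to the finite index subgroup $SG^0$ I may assume the image is generated by transvections alone. Ordering the vertices as in Section~\ref{s:gordering}, every element of $\overline{\Phi}(SG^0)$ is block lower triangular as in Equation~\eqref{matrix}; since $\G$ is complete every class is abelian, so each diagonal block $N_i$ lies in $\SL_{d_i}(\mathbb{Z})$ with $d_i\leq\dsl(G)\leq m$. Taking diagonal blocks is a homomorphism onto a subgroup of $\prod_i\SL_{d_i}(\mathbb{Z})$, and composing with the inclusions $\SL_{d_i}(\mathbb{Z})\hookrightarrow\slm$ and applying the first hypothesis shows this composite has finite image. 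Passing to the finite index subgroup on which all diagonal blocks are trivial, the image lies in the group of block strictly lower unitriangular integer matrices, which is finitely generated and torsion-free nilpotent; by Proposition~\ref{p:nilpotent} together with $\hom(\Lambda',\mathbb{Z})=0$ this remaining map is trivial, so $f$ has finite image.

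For the inductive step, first suppose $\G$ is connected. If $Z(A_\G)$ is trivial I use the amalgamated projection $P$ of Example~\ref{e:maps}. Each $lk[v]$ has strictly fewer vertices than $\G$, and by Proposition~\ref{p:dimp} the image $P_v(G)$ is generated by a subset of $\mathcal{A}_{lk[v]}$ with $\dsl(P_v(G))\leq m$, so the inductive hypothesis shows $P_v\circ f$ has finite image for each maximal $[v]$; hence $P\circ f$ does. Replacing $\Lambda$ by $f^{-1}(\ker P)$, the image of $f$ lies in $\ker P$, which is finitely generated free abelian by Theorem~\ref{t:projections}; since $\hom(\Lambda',\mathbb{Z})=0$ forces every homomorphism to a finitely generated free abelian group to be trivial, $f$ has finite image. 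If instead $Z(A_\G)$ is nontrivial I use the splitting $\outg\cong\mathrm{Tr}\rtimes(\gl(A_{[v]})\times\out(A_{lk[v]}))$ of Proposition~\ref{p:projections2}, with $[v]$ the central class. The graph $lk[v]$ has fewer vertices and $Z(A_{lk[v]})$ trivial, so $P_v\circ f$ has finite image by induction; the graph $[v]$ is complete, so $R_v\circ f$ has finite image by the base case; hence $(R_v\times P_v)\circ f$ has finite image, and after passing to $f^{-1}(\mathrm{Tr})$ the image lies in the free abelian group $\mathrm{Tr}$ and is killed by $\hom(\Lambda',\mathbb{Z})=0$ as before.

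Finally suppose $\G$ is disconnected, with $A_\G\cong F_N\ast_{i=1}^k A_{\G_i}$ as in Example~\ref{e:disconnected}; here $N\leq F(\G)$ since the isolated vertices span a discrete full subgraph, and each $\G_i$ is connected with fewer vertices than $\G$. The exclusion map $E$ sends $f$ into $\out(F_N)$, giving finite image by the second hypothesis, and each $R_i\circ f$ has finite image by induction applied to $\G_i$; amalgamating, $(\oplus_i R_i)\times E$ composed with $f$ has finite image, so after passing to a finite index subgroup the image of $f$ lies in the kernel $K$ of this amalgamated map. This kernel is the main obstacle: as recorded after Example~\ref{e:disconnected} it is not abelian, but is a semidirect product of a subgroup of $\toro$ with the free abelian group generated by the transvections $[\r_{ij}]$ with $v_i$ isolated and $v_j$ in a nontrivial component. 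Composing $f$ with the projection of $K$ onto this free abelian quotient gives a homomorphism to a finitely generated free abelian group, trivial because $\hom(\Lambda',\mathbb{Z})=0$; hence the image of $f$ lands in the $\toro$--part, and Proposition~\ref{p:tor} --- which rests on the residual torsion-free nilpotence of $\toro$ from Theorem~\ref{t:tfn} --- shows $f$ is then trivial. This completes the induction. I expect this disconnected step, and in particular the appeal to Theorem~\ref{t:tfn} to control the non-abelian kernel $K$, to be the principal difficulty; the complete-graph base case, where the $\slm$ hypothesis must be combined with a unipotent nilpotent remainder, is the second most delicate point.
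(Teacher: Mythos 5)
Your proof is correct and follows essentially the same route as the paper: induction on $|V(\G)|$ split into the disconnected, connected-with-trivial-centre, and connected-with-nontrivial-centre cases, using Lemmas~\ref{l:dimr}--\ref{l:dime} and Proposition~\ref{p:dimp} to control $\SL$--dimension, Theorem~\ref{t:projections} and Proposition~\ref{p:projections2} to kill the abelian kernels, and Propositions~\ref{p:nilpotent} and~\ref{p:tor} for the unipotent and Torelli parts. The only cosmetic differences are that you isolate the complete graph as a standalone case (the paper treats it inside the nontrivial-centre case when $\G=[v]$) and that in the disconnected case you quotient the kernel $K$ by $K\cap\toro$ where the paper equivalently inspects the block form of the image in $\gln$.
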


The remainder of this section will be dedicated to a proof of Theorem \ref{t:main}. We proceed by induction on the number of vertices in $\Gamma$.  If $\G$ contains only one vertex, then $\outg \cong \mathbb{Z}/2\mathbb{Z}$, so there is no work to do. As the conditions on $\Lambda$ are also satisfied by finite index subgroups, we shall allow ourselves to pass to such subgroups without further comment.

\begin{rem}If either $m \geq 2$ or $F(\G) \geq 2$, then as there exist no homomorphisms from $\Lambda'$ to $\slm$ or $\out(F_{F(\G)})$ with infinite image, it follows that $\hom(\Lambda',\mathbb{Z})=0$ also. This is always the case when $G=\outg$ and $|V(\G)| \geq 2$. Hence the above statement of Theorem \ref{t:main} is a strengthening of the version given in the introduction. \end{rem} 

Let $f:\Lambda \to G$ be such a homomorphism. There are three cases to consider: $\G$ is disconnected, $\G$ is connected and $Z(A_\G)$ is trivial, or $\G$ is connected and $Z(A_\G)$ is nontrivial.

\subsection{$\Gamma$ is disconnected.}
In this case $A_\G \cong F_N \ast_{i=1}^k A_{\G_i}$, where each $\G_i$ is a connected graph containing at least two vertices. Let $\Lambda'=f^{-1}(\outo).$ As $\outo$ is finite index in $\outg$, this means $\Lambda'$ is finite index in $\Lambda$. As in Example \ref{e:disconnected}, for each $\G_i$ there is a restriction homomorphism:$$R_i:\outo \to \out^0(A_{\G_i}).$$
By Lemma \ref{l:dimr}, $R_i(G)$ is generated by a subset $T_i \subset \mathcal{S}_{\G_i}$, and $\dsl(R_i(G)) \leq \dsl(G)$. As $\G_i$ is a proper subgraph of $\G$, we have $F(\G_i)\leq F(\G)$ and $|V(\G_i)| < |V(\G)|$. Hence, by induction $R_if(\Lambda')$ is finite for each $i$, and there exists a finite index subgroup $\Lambda_i$ of $\Lambda'$ such that $R_if(\Lambda_i)$ is trivial. We may also consider the exclusion homomorphism: $$E:\outg \to \out(F_N).$$ As $N \leq F(\G)$, the group $\ker(Ef)$ is a finite index subgroup of $\Lambda$. Let $$\Lambda''=\cap_{i=1}^k\Lambda_i\cap\ker(Ef).$$
As $\Lambda''$ is the intersection of a finite number of finite index subgroups, it is also finite index in $\Lambda$. We now study the action of $\Lambda''$ on $H_1(A_\G)$. The transvection $[\r_{ij}]$ belongs to $\outo$ only if either $v_i$ and $v_j$ belong to the same connected component of $\G$, or if $v_i$ is an isolated vertex of $\G$. Therefore the action of $\outo$ on $H_1(A_\G)$ has a block decomposition of the following form:
$$\begin{pmatrix}M_1 & 0 & \dots & 0 & * \\
0 & M_2 & \dots & 0 & * \\
\hdotsfor{5} \\
0 & 0 & \dots & M_k & * \\
0 & 0 & \dots & 0 & M_{k+1} 
 \end{pmatrix},$$
where $M_i$ corresponds to the action on $A_{\G_i}$, and $M_{k+1}$ corresponds to the action on $F_N$. Moreover, as $R_if(\Lambda'')$ is trivial for each $i$, and $Ef(\Lambda'')$ is trivial, the action of $\Lambda''$ on $H_1(A_\G)$ is of the form:
$$\begin{pmatrix}I & 0 & \dots & 0 & * \\
0 & I & \dots & 0 & * \\
\hdotsfor{5} \\
0 & 0 & \dots & I & * \\
0 & 0 & \dots & 0 & I 
 \end{pmatrix}.$$
This means there is a homomorphism from $\Lambda''$ to an abelian subgroup of $\gln$. As $\hom(\Lambda'',\mathbb{Z})=0$, this homomorphism must be trivial. Hence $f(\Lambda'') \subset \toro.$ By Proposition \ref{p:tor}, this shows that $f(\Lambda'')$ is trivial. Hence $f(\Lambda)$ is finite.
\subsection{$\G$ is connected and $Z(A_\G)$ is trivial.}
Let $\Lambda'=f^{-1}(\outo)=f^{-1}(G^0)$. For each maximal vertex $v$ of $\G$ we have a projection homomorphism:$$P_v:\outo \to \out^0(A_{lk[v]}).$$  By Proposition~\ref{p:dimp}, $P_v(G^0)$ is generated by a subset $T_v \subset \mathcal{S}_{lk[v]}$ and $\dsl(P_v(G^0))\leq \dsl(G^0)=\dsl(G)$. As $lk[v]$ is a proper subgraph of $\G$, we have $F(lk[v])\leq F(\G)$ and $|V(lk[v])| < |V(\G)|$. Therefore by induction $P_vf(\Lambda')$ is finite. Let $$\Lambda''=\bigcap_{[v] \text{ max.}}\ker (P_vf).$$
Then $\Lambda''$ is a finite index subgroup of $\Lambda$ and lies in the kernel of the amalgamated projection homomorphism: $$P:\outo \to \bigoplus_{[v] \text{ max.}}\out^0(A_{lk[v]}).$$  By Theorem \ref{t:projections}, $\ker P$ is a finitely generated free-abelian group. As $\hom(\Lambda'',\mathbb{Z})=0$, a homomorphism from $\Lambda''$ to $\ker P$ must be trivial. Therefore $f(\Lambda'')$ is trivial and $f(\Lambda)$ is finite.

\subsection{$\G$ is connected and $Z(A_\G)$ is nontrivial.}

Suppose that $Z(A_\G)$ is nontrivial. Let $[v]$ be the unique maximal equivalence class in $\G$, so that $Z(A_\G)=A_{[v]}$. Let $P_v$ and $R_v$ be the restriction and projection maps given in Proposition~\ref{p:projections2} so that:
\begin{align*} R_v\co & \outg \to \out(A_{[v]})\cong \gl(A_{[v]}) \\ P_v\co& \outg \to \out(A_{lk[v]}) \end{align*}
If $[v]$ is not equal to the whole of $\G$ then by induction $P_vf(\Lambda)$ and $R_vf(\Lambda)$ are both finite, and there exists a finite index subgroup $\Lambda'$ of $\Lambda$ such that $f(\Lambda')$ is contained in the kernel of $P_v \times R_v$. By Proposition~\ref{p:projections2} this is the free abelian group $\text{Tr}$, so the image of $\Lambda'$ in $\text{Tr}$ is trivial, and $f(\Lambda)$ is finite.

We may then assume that $\G=[v]$. We now look at the $\sim_G$ equivalence classes in $\G$. As $A_\G$ is free abelian, each $[v_i]_G \subset [v]$ is abelian, and as $\dsl(G) \leq m$, every such $[v_i]_G$ contains at most $m$ vertices.  Therefore matrices in (the image of) $G^0$ (under $\overline{\Phi}$) are of the form:

$$ M=\begin{pmatrix}N_1 & 0 & \dots & 0 \\
* & N_2 & \dots & 0 \\
\hdotsfor{4} \\
* & * & \dots & N_{r'} 
 \end{pmatrix},$$

where the $*$ in the $(i,j)$th block is possibly non-zero if $[v_{l_j}]\leq[v_{l_i}]$. For each $i$, we can look at the projection $M \mapsto N_{i}$ to obtain a homomorphism $g_i:{\rm{PT}}G^0 \to \text{SL}_{l_{i+1}-l_i}(\mathbb{Z})$. As $l_{i+1}-l_i \leq m$, our hypothesis on $\Lambda$ implies that $g_if(f^{-1}({\rm{PT}}G^0))$ is finite for all $i$. Let $\Lambda_i$ be the kernel of each map $g_if$ restricted to $f^{-1}({\rm{PT}}G^0)$. Each $\Lambda_i$ is finite index in $\Lambda$. Let $\Lambda'= \cap_{i=1}^k \Lambda_i.$ Then matrices in the image of $\Lambda'$ under $f$ are of the form:
 $$ M=\begin{pmatrix}I & 0 & \dots & 0 \\
* & I & \dots & 0 \\
\hdotsfor{4} \\
* & * & \dots & I 
 \end{pmatrix},$$
and $f(\Lambda')$ is a torsion-free nilpotent group. By Proposition \ref{p:nilpotent} this implies that $f(\Lambda')$ is trivial, so $f(\Lambda)$ is finite and this finishes the final case of the theorem.

\section{Consequences of Theorem \ref{t:main}} \label{s:consequences}

We say that a group $\Lambda$ is $\mathbb{Z}$--\emph{averse} if no finite index subgroup of $\Lambda$ contains a normal subgroup that maps surjectively to $\mathbb{Z}$.  This restriction gives a large class of groups to which $\Lambda$ cannot map. For instance, in \cite{BW2010} Bridson and the author prove the following theorem:

\begin{theorem}\label{t:free}
Suppose that $\Lambda$ is a finitely generated $\mathbb{Z}$--averse group and $f:\Lambda \to \outn$ is a homomorphism. Then $f(\Lambda)$ is finite.
\end{theorem}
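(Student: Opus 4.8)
The plan is to induct on the rank $n$, and to show that if $f(\Lambda)$ were infinite one could manufacture the object that $\mathbb{Z}$-aversion forbids: a normal subgroup of a finite-index subgroup of $\Lambda$ that surjects onto $\mathbb{Z}$. Since $\mathbb{Z}$-aversion is inherited by finite-index subgroups, I allow myself to replace $\Lambda$ by a finite-index subgroup freely. The base cases are quick: $\out(F_1)\cong\mathbb{Z}/2\mathbb{Z}$ is finite, and $\out(F_2)\cong\gl_2(\mathbb{Z})$ is virtually free, so an infinite finitely generated image would contain a finite-index free subgroup of positive rank; pulling back its surjection onto $\mathbb{Z}$ would produce a finite-index subgroup $\Lambda'\leq\Lambda$ with $\Lambda'\twoheadrightarrow\mathbb{Z}$, contradicting $\hom(\Lambda',\mathbb{Z})=0$ (a consequence of $\mathbb{Z}$-aversion). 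The same linear analysis disposes of any virtually polycyclic image arising later.

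For the inductive step I would organise everything around the dichotomy supplied by the subgroup structure theory of $\outn$ \cite{HM}: after passing to a finite-index subgroup, either $f(\Lambda)$ preserves the conjugacy class of a proper free factor system of $F_n$, or $f(\Lambda)$ contains a fully irreducible automorphism.

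In the reducible case I would exploit the preserved free factor system exactly as the restriction and exclusion homomorphisms are used in Section~\ref{s:main}. Writing the system as $\{A_1,\dots,A_k\}$ with each $A_i$ a free factor of rank strictly less than $n$, restriction yields homomorphisms $f(\Lambda)\to\out(A_i)$ and exclusion yields a homomorphism to the outer automorphism group of the complementary free quotient, which again has smaller rank; by the inductive hypothesis all of these have finite image. Passing to a further finite-index subgroup we may assume the action on the factor system and on the homology of each factor is trivial. What remains is then controlled by the Torelli subgroup $\mathcal{T}(F_n)$ — whose Andreadakis--Johnson filtration has torsion-free free-abelian quotients and trivial intersection, so that $\mathcal{T}(F_n)$ is residually torsion-free nilpotent — together with an abelian group of twists. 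Applying the free-group analogues of Propositions~\ref{p:nilpotent} and~\ref{p:tor}, which need only $\hom(\Lambda',\mathbb{Z})=0$, this residual part is trivial on a finite-index subgroup. Hence $f(\Lambda)$ is finite, contrary to assumption.

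The genuinely hard case, which forces the full strength of $\mathbb{Z}$-aversion and the deep geometric input \cite{HM,DGO,BF}, is when $f(\Lambda)$ contains a fully irreducible $\phi$. Such a $\phi$ acts as a loxodromic WPD isometry of the hyperbolic free factor complex \cite{BF}, with North--South dynamics and a virtually cyclic stabiliser of its attracting--repelling pair, and its expansion factor supplies a homomorphism from that stabiliser onto $\mathbb{Z}$. If $f(\Lambda)$ is itself virtually cyclic, then a power of $\phi$ generates an infinite cyclic subgroup that is normal in a finite-index subgroup; pulling this back along $f$ produces a normal subgroup of a finite-index subgroup of $\Lambda$ surjecting onto $\mathbb{Z}$, which is forbidden. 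Otherwise $f(\Lambda)$ contains an element whose axis is independent of that of $\phi$, and the small-cancellation machinery of \cite{DGO} applied to these loxodromic WPD elements yields a nonabelian free subgroup lying inside a normal subgroup of a finite-index subgroup; its surjection onto $\mathbb{Z}$ again contradicts $\mathbb{Z}$-aversion. The main obstacle throughout is precisely this last point: extracting not merely a free subgroup but one lying in a \emph{normal} subgroup of a finite-index subgroup, so that the resulting surjection onto $\mathbb{Z}$ is of the forbidden normal type rather than a surjection of $\Lambda$ itself. Controlling the boundary dynamics uniformly across the whole image and preserving normality is exactly what the results of \cite{HM,DGO,BF} are designed to provide.
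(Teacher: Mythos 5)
This theorem is not proved in this paper: it is imported verbatim from \cite{BW2010}, and the text here offers no argument beyond the citation, so there is no in-house proof to compare against. That said, your outline reproduces the strategy of \cite{BW2010} exactly as the introduction summarises it --- induction on rank, the reducible/fully-irreducible dichotomy of \cite{HM}, restriction to free factors plus residual torsion-free nilpotence of the Torelli subgroup in the reducible case, and the machinery of \cite{BF,DGO} to manufacture a normal subgroup of a finite-index subgroup surjecting onto $\mathbb{Z}$ in the fully irreducible case --- so as a blueprint it is the same proof.
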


In \cite{BW2010} the theorem is incorrectly stated as being true for infinitely generated groups. The proof uses the fact that every finitely generated fully irreducible subgroup of $\out(F_n)$ contains a fully irreducible element \cite{HM}. Finite generation was missing from the statement of this result in \cite{HM}, but corrected in \cite{HM2}.

Note that if $\Lambda$ is $\mathbb{Z}$--averse, then every finite index subgroup of $\Lambda$ is also $\mathbb{Z}$--averse. As there are no homomorphisms from a $\mathbb{Z}$--averse group to $\SL_2(\mathbb{Z})$ with infinite image (as $\SL_2(\mathbb{Z})$ is virtually free), combining the above with Theorem \ref{t:main} we obtain:

\begin{cor}
If $\Lambda$ is a finitely generated $\mathbb{Z}$--averse group, and $\G$ is a finite graph that satisfies ${\dsl(\G)\leq 2},$ then every homomorphism $f:\Lambda \to \outg$ has finite image.
\end{cor}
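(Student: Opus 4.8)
The plan is to derive the corollary as a direct application of Theorem~\ref{t:main}, taking the ambient group to be $G=\outg$ and $m=2$. Since $\outg$ is generated by the whole of $\mathcal{A}_\G$, it trivially qualifies as a subgroup generated by a subset $T\subset\mathcal{A}_\G$, and the standing hypothesis $\dsl(\outg)\leq 2$ supplies $\dsl(G)\leq m=2$. It then remains only to verify the three bulleted conditions of Theorem~\ref{t:main} for an arbitrary finite index subgroup $\Lambda'\leq\Lambda$.

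The observation I would use throughout is that a finite index subgroup of a $\mathbb{Z}$--averse group is again $\mathbb{Z}$--averse, so each such $\Lambda'$ inherits the property. This immediately yields the third condition: applying $\mathbb{Z}$--aversion to $\Lambda'$ as a (finite index, normal) subgroup of itself forbids a surjection $\Lambda'\twoheadrightarrow\mathbb{Z}$, and since any nonzero homomorphism $\Lambda'\to\mathbb{Z}$ has image isomorphic to $\mathbb{Z}$ and hence gives such a surjection after composing with a rescaling isomorphism, we conclude $\hom(\Lambda',\mathbb{Z})=0$.

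For the first condition, with $m=2$ the relevant target is $\SL_2(\mathbb{Z})$, which is virtually free. Here I would argue that a $\mathbb{Z}$--averse group admits no homomorphism to $\SL_2(\mathbb{Z})$ with infinite image: an infinite image is a finitely generated, infinite, virtually free group, so it contains a finite index free subgroup of positive rank; pulling this subgroup back along the homomorphism produces a finite index subgroup of $\Lambda'$ that surjects onto $\mathbb{Z}$, contradicting $\mathbb{Z}$--aversion. Hence every homomorphism $\Lambda'\to\SL_2(\mathbb{Z})$ has finite image. For the second condition I would invoke Theorem~\ref{t:free} directly: for each $N\leq F(\G)$ with $N\geq 2$ it asserts that every homomorphism from the $\mathbb{Z}$--averse group $\Lambda'$ to $\out(F_N)$ has finite image, while the degenerate cases $N\leq 1$ are immediate since $\out(F_N)$ is then finite. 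With all three conditions established, Theorem~\ref{t:main} gives that $f(\Lambda)$ is finite.

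I do not anticipate a genuine obstacle, since the substantive work has already been absorbed into Theorems~\ref{t:main} and~\ref{t:free}; the corollary is essentially a bookkeeping argument translating $\mathbb{Z}$--aversion into those theorems' hypotheses. The only points that demand a moment of care are the virtually free argument ruling out infinite images in $\SL_2(\mathbb{Z})$ and the correct treatment of the small values of $N$ where $\out(F_N)$ is finite.
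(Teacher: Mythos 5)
Your proposal is correct and follows essentially the same route as the paper: the corollary is obtained by feeding $G=\outg$, $m=2$ into Theorem~\ref{t:main}, using that finite index subgroups of $\mathbb{Z}$--averse groups are $\mathbb{Z}$--averse, that $\SL_2(\mathbb{Z})$ is virtually free (so admits no infinite image of a $\mathbb{Z}$--averse group), and Theorem~\ref{t:free} for the $\out(F_N)$ condition. The extra details you supply (the pullback argument for the virtually free case and the degenerate $N\leq 1$ cases) are correct refinements of what the paper leaves implicit.
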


We would like to apply Theorem \ref{t:main} to irreducible lattices in higher-rank Lie groups. For the remainder of this section $\Lambda$ will be an irreducible lattice in a semisimple real Lie group $G$ with real rank $\rrank G \geq 2$, finite centre, and no compact factors. Such lattices are $\mathbb{Z}$-averse by Margulis' normal subgroup theorem, which states that if $\Lambda'$ is a normal subgroup of $\Lambda$ then either $\Lambda/\Lambda'$ is finite, or $\Lambda'\subset Z(G)$, so $\Lambda'$ is finite. The work of Margulis also lets us restrict the linear representations of such lattices:

\begin{proposition} \label{p:SLrigidity}
If $\rrank G \geq k$ then every homomorphism $f:\Lambda \to \SL_k(\mathbb{Z})$ has finite image. \end{proposition}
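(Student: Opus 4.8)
The plan is to deduce this directly from Margulis' superrigidity theorem, using the hypothesis $\rrank G \geq k$ together with the irreducibility of $\Lambda$. Suppose, for contradiction, that $f(\Lambda)$ is infinite. Since $\SL_k(\mathbb{Z})$ is discrete in $\SL_k(\mathbb{R})$, an infinite subgroup of it cannot be relatively compact, so $f$ is a representation $\Lambda \to \SL_k(\mathbb{R})$ with non-precompact image. After replacing $\SL_k$ by the Zariski closure $\mathbf{H}$ of $f(\Lambda)$, passing to a finite-index subgroup of $\Lambda$ so that $\mathbf{H}$ is connected, and quotienting out the solvable radical (Margulis' normal subgroup theorem forbids infinite solvable quotients of $\Lambda$, since the kernel of any such map would be an infinite normal subgroup, hence of finite index), we may assume $f$ has Zariski-dense image in a connected semisimple $\mathbb{R}$-group. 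Margulis' superrigidity theorem then furnishes a continuous homomorphism $\tilde f \co G \to \SL_k(\mathbb{R})$ whose restriction to a finite-index subgroup of $\Lambda$ agrees with $f$; in particular $\tilde f$ is nontrivial.

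I would then analyse $\tilde f$ through its kernel. Write $G$, up to isogeny, as a product of its noncompact simple factors. Since $G$ is semisimple, $\ker \tilde f$ is, up to finite index, a product of a subset of these factors, and $\tilde f(G)$ is a connected semisimple subgroup of $\SL_k(\mathbb{R})$, so $\rrank \tilde f(G) \leq \rrank \SL_k(\mathbb{R}) = k-1$. If $\ker \tilde f$ is finite, then $\rrank \tilde f(G) = \rrank G \geq k > k-1$, which is absurd; this is exactly where the hypothesis $\rrank G \geq k$ enters. Otherwise $\ker \tilde f$ contains a noncompact factor, so that $\tilde f(G)$ is positive-dimensional, and by irreducibility of $\Lambda$ the composite $\Lambda \to G \to \tilde f(G)$ has dense image. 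Thus $f(\Lambda) = \tilde f(\Lambda)$ is dense in the connected group $\tilde f(G)$, while $f(\Lambda)$ is discrete; a dense discrete subgroup of a connected group forces that group to be trivial, contradicting the nontriviality of $\tilde f$. In either case the assumption that $f(\Lambda)$ is infinite fails, so $f(\Lambda)$ is finite.

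The main obstacle is the correct invocation of superrigidity: one must verify that the non-precompactness of $f(\Lambda)$ and the reduction to a Zariski-dense representation into a semisimple group genuinely place us within the hypotheses of Margulis' theorem. The rank bookkeeping afterwards is routine once the continuous extension $\tilde f$ is in hand. The delicate point to emphasise is that $\rrank G \geq k$ by itself only disposes of the case of a (virtually) injective $\tilde f$; the irreducibility of $\Lambda$ is essential to exclude the degenerate extensions that collapse $G$ onto a low-rank factor, since those would otherwise evade the rank inequality.
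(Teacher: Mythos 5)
Your overall strategy is the same as the paper's: pass to the Zariski closure of $f(\Lambda)$, reduce to a Zariski-dense representation into a semisimple group, invoke Margulis superrigidity to produce a continuous extension $\tilde f\co G\to \SL_k(\mathbb{R})$, and rule it out by comparing real ranks. Your rank endgame is correct, and your treatment of the case where $\ker\tilde f$ is infinite --- density of the projection of an irreducible lattice modulo a noncompact normal factor against discreteness of $f(\Lambda)$ --- is a legitimate alternative to the paper's route, which instead quotes the stronger clause of superrigidity (a nontrivial discrete Zariski-dense image in an adjoint target forces $\tilde f$ to be an isomorphism) and so gets the contradiction $\rrank G\leq k-1$ in one line. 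What you are doing there is essentially re-proving that clause, which is harmless.

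There is, however, a genuine gap exactly at the point you flag and then defer: the reduction to the hypotheses of superrigidity. Killing the solvable radical is not the real issue (the paper simply quotes Margulis' theorem that the Zariski closure of the image of such a lattice is semisimple). The issue is that the identity component of that Zariski closure may have nontrivial \emph{compact} factors and a nontrivial centre, while the version of superrigidity being invoked requires the target to be adjoint with no compact factors. This is not a formality: a subgroup of $\SL_k(\mathbb{Z})$ can be Zariski-dense in a group with a positive-dimensional compact factor (restriction-of-scalars examples), and a homomorphism from $\Lambda$ to a compact group need not extend continuously to $G$ at all --- it can have infinite, dense image, whereas any continuous homomorphism from $G$ to a compact group is trivial since $G$ has no compact factors. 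So without first projecting away the compact factor and the (finite) centre, the continuous extension $\tilde f$ on which your whole argument rests is not guaranteed to exist. The paper's proof spends most of its length on precisely this: writing $\overline{f(\Lambda)}_0=H_1\times K$ with $K$ maximal compact, arranging the image to lie in $H_1$, and passing to $H_1/Z(H_1)$ before applying superrigidity; one must then also recover finiteness of $f(\Lambda)$ itself from triviality of the induced map, using that a discrete subgroup of a compact-by-finite group is finite. Since this verification is where the actual content of the proof lies, the argument as written is incomplete.
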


To prove this we appeal to Margulis superrigidity. The following two theorems follow from \cite{Margulis91}, Chapter IX, Theorems 6.15 and 6.16 and the remarks in 6.17:

\begin{theorem}\label{t:ss}
Let H be a real algebraic group and $f:\Lambda \to H$ a homomorphism. The Zariski closure of the image of $f$, denoted $\overline{f(\Lambda)}$, is semisimple.
\end{theorem}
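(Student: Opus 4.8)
The plan is to analyse the structure of $M:=\overline{f(\Lambda)}$ directly, using two inputs about $\Lambda$: Margulis' normal subgroup theorem (already invoked above to see that $\Lambda$ is $\mathbb{Z}$--averse) and superrigidity. Passing to the finite-index subgroup $\Lambda'=f^{-1}(M^0)\cap\Lambda$, which is again an irreducible higher-rank lattice of the same type, we may assume $M$ is connected, so that it suffices to prove $M$ semisimple. I would fix a Levi decomposition $M=L\ltimes U$, where $U=R_u(M)$ is the unipotent radical and $L$ a reductive Levi factor, with $L=Z^{\circ}\cdot[L,L]$, $[L,L]$ semisimple and $Z^{\circ}$ a central torus. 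Then $M$ is semisimple precisely when $Z^{\circ}$ is trivial and $U=\{1\}$, i.e. when the solvable radical of $M$ vanishes, and I would treat these two conditions in turn.

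First I would rule out a central torus. The maximal torus quotient of $M$ is a homomorphism $q\co M\to S$ onto a torus $S$ whose image contains $Z^{\circ}$, and $q\circ f\co\Lambda\to S(\mathbb{R})$ is a homomorphism into an abelian group; it therefore factors through the abelianisation of $\Lambda$. Since $\Lambda$ has finite abelianisation (indeed it is $\mathbb{Z}$--averse, as noted above: an infinite abelianisation would force $[\Lambda,\Lambda]$ to have infinite index, hence to be finite by the normal subgroup theorem, making $\Lambda$ virtually abelian), the image $q(f(\Lambda))$ is finite. But $f(\Lambda)$ is Zariski-dense in $M$, so $S$ is finite and hence trivial; thus $Z^{\circ}=\{1\}$ and $L$ is semisimple.

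The substantive step is to show $U=\{1\}$, and this is where superrigidity does the work. Suppose $U\neq\{1\}$ and descend through the lower central series of $U$: the lowest nontrivial abelian quotient $V=U_{c}/U_{c+1}$ is a nonzero vector group on which $L$ acts linearly through $f$. Quotienting by $U_{c+1}$ realises $f(\Lambda)$ inside an extension $1\to V\to L\ltimes V\to L\to\{1\}$, and the obstruction to conjugating $f(\Lambda)$ into the Levi complement $L$ is a class in $H^1(\Lambda,V)$. This class vanishes — equivalently the associated cocycle is a coboundary — which is exactly the content of Margulis' superrigidity (Theorem IX.6.15 of \cite{Margulis91}), and is also a consequence of Kazhdan's property (T) enjoyed by $\Lambda$. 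Hence $f(\Lambda)$ may be conjugated into $L\ltimes U_{c+1}$; iterating up the central series conjugates $f(\Lambda)$ into $L$ itself, contradicting its Zariski-density in $M=L\ltimes U$. Therefore $U=\{1\}$ and $M=L$ is semisimple.

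The main obstacle is precisely this unipotent step: unlike the torus case, projection to $V$ is only a cocycle and not a homomorphism, so it cannot be killed by the normal subgroup theorem alone. Controlling it requires the full strength of superrigidity, equivalently the property (T) cohomology vanishing $H^1(\Lambda,V)=0$ — exactly the results of \cite{Margulis91} cited here — after which the torus case and the bookkeeping with the central series are routine.
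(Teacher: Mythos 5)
The paper offers no proof of this statement: Theorem \ref{t:ss} is imported verbatim from Margulis \cite{Margulis91} (Chapter IX, Theorems 6.15, 6.16 and the remarks in 6.17), so you are reconstructing a result the paper treats as a black box. Your reconstruction follows the standard derivation and is essentially sound: kill the torus quotient of $M=\overline{f(\Lambda)}$ using the finiteness of $H_1(\Lambda)$ (which you correctly extract from the normal subgroup theorem), and kill the unipotent radical by interpreting the failure of $f(\Lambda)$ to conjugate into a Levi factor as a class in $H^1(\Lambda,V)$ for a finite-dimensional $\Lambda$-module $V$, whose vanishing is Margulis' cohomology theorem. Two blemishes. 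First, the aside that the vanishing of $H^1(\Lambda,V)$ is ``also a consequence of Kazhdan's property (T)'' is wrong in the generality you need: property (T) kills $H^1$ with \emph{unitary} coefficients, whereas here $V$ carries a typically non-unitarizable algebraic representation of the semisimple Levi factor; vanishing for arbitrary finite-dimensional modules in characteristic zero is genuinely Margulis' theorem, not a formal consequence of (T). Second, the bookkeeping in the unipotent step is garbled: with $U_{c+1}=\{1\}$, ``quotienting by $U_{c+1}$'' does nothing, and $M/U_{c+1}=L\ltimes U$ is not of the form $L\ltimes V$ unless $U$ is already abelian. The intended argument should use the top quotient $V=U/[U,U]$: the cocycle $\Lambda\to V$ is a coboundary, so the image of $f(\Lambda)$ in $L\ltimes(U/[U,U])$ conjugates into $L$, contradicting Zariski density unless $U=[U,U]$, which forces $U=\{1\}$ by nilpotence --- no iteration up the central series is needed. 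With these repairs the argument is correct, though it ultimately rests on the same results of \cite{Margulis91} that the paper cites for the statement itself.
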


\begin{theorem}[Margulis' Superrigidity Theorem]\label{t:sr}
Let $H$ be a connected, semisimple, real algebraic group and $f:\Lambda \to H$ a homomorphism. If
\begin{itemize}
\item H is adjoint (equivalently $Z(H)=1$) and has no compact factors, and
\item $f(\Lambda)$ is Zariski dense in H,
\end{itemize}
then $f$ extends uniquely to a continuous homomorphism $\tilde{f}:G \to H$. Furthermore, if $Z(G)=1$ and $f(\Lambda)$ is nontrivial and discrete, then $\tilde{f}$ is an isomorphism.
\end{theorem}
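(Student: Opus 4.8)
The plan is to prove Theorem~\ref{t:sr} along Margulis' original ergodic-theoretic route, via measurable boundary maps and a rationality theorem. The two halves of the statement have very different flavours: uniqueness of $\tilde{f}$ is essentially formal, existence is the whole substance, and the concluding isomorphism clause is a clean consequence once $\tilde{f}$ is in hand. First I would settle uniqueness and some reductions. By the Borel density theorem the irreducible lattice $\Lambda$ is Zariski dense in $G$, so any continuous homomorphism $G \to H$ is determined by its restriction to $\Lambda$; thus at most one extension $\tilde{f}$ exists and it suffices to build one. Since $H$ is adjoint with no compact factors, it acts faithfully and algebraically on its flag varieties $H/Q$, the action on $H/Q_0$ for a minimal parabolic $Q_0 \le H$ being proximal in Furstenberg's sense; this is the target geometry I would exploit.

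The core is the construction of a measurable $\Lambda$-equivariant boundary map. Fix a minimal parabolic $P \le G$ and let $B = G/P$ be the Furstenberg boundary, on which the $G$-action, hence the $\Lambda$-action, is amenable. Amenability of the $\Lambda$-action, paired with compactness of the space $\mathrm{Prob}(H/Q_0)$ of probability measures on the flag variety $H/Q_0$, produces a measurable $\Lambda$-equivariant map $\phi\co B \to \mathrm{Prob}(H/Q_0)$. Proximality of the $H$-action together with ergodicity of the diagonal $\Lambda$-action on $B \times B$ --- here Moore's ergodicity theorem and the $\rrank G \ge 2$ hypothesis first enter --- then let me sharpen $\phi$ to a map $\psi\co B \to H/Q$ into an actual flag variety, that is, one concentrated on Dirac masses.

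The decisive and hardest step, and the one I expect to be the main obstacle, is to upgrade the merely measurable map $\psi$ to a rational one: one shows that $\psi$ agrees almost everywhere with a morphism of varieties. This is precisely where higher rank is indispensable, as one uses the abundance of parabolic subgroups and the ergodic behaviour of the horospherical unipotent subgroups to force $\psi$ to be invariant under enough of $P$ to become algebraically rigid; the formal engine is the algebraicity criterion for measurable equivariant maps into a variety over an algebraic group. Once $\psi$ is rational and $\Lambda$-equivariant, transitivity of the $G$-action on $B = G/P$ lets me transport the data defining $f$ into an algebraic object and thereby produce a rational, hence continuous, homomorphism $\tilde{f}\co G \to H$ extending $f$.

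For the final clause, assume $Z(G) = 1$ and that $f(\Lambda)$ is nontrivial and discrete. The image $\tilde{f}(G)$ is a Zariski-closed connected subgroup containing the Zariski-dense set $f(\Lambda)$, so $\tilde{f}$ is surjective. If $\ker \tilde{f}$ contained a simple factor $G_i$ of $G$, then $\tilde{f}$ would factor through $G/G_i$; since the projection of the irreducible lattice $\Lambda$ to the complementary factors is dense, the image $f(\Lambda) = \tilde{f}(\Lambda)$ would be dense in $H$, contradicting discreteness because $H$ is positive-dimensional. Hence $\ker \tilde{f}$ is discrete and normal in the connected group $G$, so central, and therefore trivial since $Z(G) = 1$. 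Thus $\tilde{f}$ is a continuous bijective homomorphism, i.e.\ an isomorphism, completing the proof.
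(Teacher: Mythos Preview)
The paper does not prove Theorem~\ref{t:sr} at all: it is quoted as a black box from Margulis' monograph (Chapter~IX, Theorems~6.15 and~6.16 and the remarks in~6.17), and is then immediately applied in the proof of Proposition~\ref{p:SLrigidity}. So there is no ``paper's own proof'' to compare against; the author treats superrigidity as an input, not an output.

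That said, your sketch is a faithful outline of Margulis' original ergodic-theoretic argument --- amenability of the $\Lambda$-action on the Furstenberg boundary to get a measurable equivariant map into $\mathrm{Prob}(H/Q_0)$, proximality and double ergodicity to pin it down to a map into the flag variety itself, and then the rationality theorem (the genuinely hard higher-rank step) to promote it to an algebraic map and hence a continuous extension $\tilde f$. Your final paragraph deducing the isomorphism clause from irreducibility of $\Lambda$ and triviality of $Z(G)$ is clean and correct. As a proof \emph{plan} this is the right shape; as a proof it is of course many pages short of complete, with the rationality step in particular absorbing most of the real work. For the purposes of this paper, however, none of that is needed: a citation suffices.
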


We may combine these to prove Proposition \ref{p:SLrigidity}:

\begin{proof}[Proof of Proposition \ref{p:SLrigidity}]
Let $f:\Lambda \to \SL_k(\mathbb{Z})$ be a homomorphism. By Theorem~\ref{t:ss}, the Zariski closure of the image $\overline{f(\Lambda)} \subset \SL_k(\mathbb{R})$ is semisimple. Also, $\overline{f(\Lambda)}$ has finitely many connected components --- let $\overline{f(\Lambda)}_0$ be the connected component containing the identity. Decompose $\overline{f(\Lambda)}_0=H_1 \times K$, where $K$ is a maximal compact factor. Then $H_1$ is a connected semisimple real algebraic group with no compact factors. We look at the finite index subgroup $\Lambda_1=f^{-1}(H_1)$ of $\Lambda$, so that $\overline{f(\Lambda_1)}=H_1$. As the centre of a subgroup of an algebraic group is contained in the centre of its Zariski closure, $f(Z(\Lambda_1)) \subset Z(H_1)$. This allows us to factor out centres in the groups involved. Let $G_2=G/Z(G)$, $\Lambda_2=\Lambda_1/Z(\Lambda_1)=\Lambda_1/(\Lambda_1 \cap Z(G))$ and $H_2=H_1/Z(H_1)$. Then there is an induced map $f_2:\Lambda_2 \to H_2$ satisfying the conditions of Theorem \ref{t:sr}. 
Therefore if $f_2(\Lambda_2)\neq1$ there is an isomorphism $\tilde{f}_2:G_2 \to H_2$. However \begin{align*}\rrank G_2=\rrank G&\geq k \\ \rrank H_2 = \rrank H_1 \leq \rrank \SL_k(\mathbb{R}) &= k-1.\end{align*}
This contradicts the isomorphism between $H_2$ and $G_2$. Therefore $f_2(\Lambda_2)=1$. As $Z(\Lambda_1)$ is finite, and $\Lambda_1$ is finite index in $\Lambda$, this show that the image of $\Lambda$ under $f$ is finite.
\end{proof}

Combining Proposition \ref{p:SLrigidity} with Theorems \ref{t:free} and \ref{t:main}, this gives:

\begin{theorem}\label{t:lr}
Let $G$ be a real semisimple Lie group with finite centre, no compact factors, and $\rrank G \geq 2$. Let $\Lambda$ be an irreducible lattice in $G$. If $\rrank G \geq d_{SL}(\G)$, then every homomorphism $f:\Lambda \to \outg$ has finite image.
\end{theorem}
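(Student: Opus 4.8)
The plan is to deduce the theorem from the algebraic criterion of Theorem~\ref{t:main} applied to the full group $G=\outg$, taking $m=\dsl(\outg)$. Since $\mathcal{A}_\G$ generates $\outg$, the group $\outg$ is itself a subgroup generated by a subset of $\mathcal{A}_\G$, and $\dsl(\outg)\leq m$ holds trivially. The hypothesis of the theorem is precisely $\rrank G\geq\dsl(\outg)=m$, so the whole argument comes down to checking the three bullet-point hypotheses of Theorem~\ref{t:main} for every finite index subgroup $\Lambda'\leq\Lambda$, feeding in the rigidity supplied by Proposition~\ref{p:SLrigidity} and Theorem~\ref{t:free}.

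First I would record two inheritance facts. Any finite index subgroup $\Lambda'$ of an irreducible lattice in $G$ is again an irreducible lattice in $G$, and hence has the same real rank $\rrank G$; and, as noted before Proposition~\ref{p:SLrigidity}, Margulis' normal subgroup theorem shows $\Lambda$ is $\mathbb{Z}$--averse, so every finite index $\Lambda'$ is $\mathbb{Z}$--averse as well. Using these, I verify the three conditions in turn. For the first, since $\rrank G\geq m$ and $\Lambda'$ is an irreducible lattice of this rank, Proposition~\ref{p:SLrigidity} (with $k=m$) shows every homomorphism $\Lambda'\to\slm$ has finite image. For the second, $\mathbb{Z}$--aversion of $\Lambda'$ lets me invoke Theorem~\ref{t:free}, so for every $N$, and in particular for all $N\leq F(\G)$, any homomorphism $\Lambda'\to\out(F_N)$ has finite image. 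For the third, $\mathbb{Z}$--aversion of $\Lambda'$ forbids a surjection onto $\mathbb{Z}$, giving $\hom(\Lambda',\mathbb{Z})=0$.

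With the three hypotheses confirmed for all finite index $\Lambda'$, Theorem~\ref{t:main} applies to any homomorphism $f\co\Lambda\to\outg$ and yields that $f(\Lambda)$ is finite. I expect no genuine obstacle: the substantive geometric and algebraic content is already packaged in Theorems~\ref{t:free} and~\ref{t:main} and in Proposition~\ref{p:SLrigidity}, and the remaining work is the bookkeeping that identifies $m=\dsl(\outg)$ with the index of the relevant special linear group and confirms that the hypotheses descend to finite index subgroups. The only point deserving emphasis is that the rank inequality in the statement is exactly what Proposition~\ref{p:SLrigidity} consumes in verifying the first condition, which is also why the bound cannot be improved: $\outg$ genuinely contains a copy of $\slm$.
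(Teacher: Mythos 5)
Your proposal is correct and follows essentially the same route as the paper, which deduces Theorem \ref{t:lr} by combining Proposition \ref{p:SLrigidity}, Theorem \ref{t:free}, and Theorem \ref{t:main}; your verification of the three hypotheses (via the inheritance of irreducibility and $\mathbb{Z}$--aversion by finite index subgroups) is exactly the bookkeeping the paper leaves implicit. No discrepancies to report.
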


The following corollary justifies our definition of $\SL$--dimension, and shows that you can't hide any larger copies of $\SL_n(\mathbb{Z})$ inside $\outg$:

\begin{cor} \label{c:dim}
For $k \geq 3$, the group $\outg$ contains a subgroup isomorphic to $\SL_k(\mathbb{Z})$ if and only if $k \leq \dsl(\G)$.
\end{cor}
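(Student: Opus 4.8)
The plan is to prove the two implications separately: the ``if'' direction by exhibiting the subgroup explicitly, and the ``only if'' direction by feeding $\SL_k(\mathbb{Z})$ itself into Theorem~\ref{t:lr}.

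First I would dispose of the ``if'' direction. Suppose $k \leq m$, where $m = \dsl(\outg)$. By definition there is an abelian equivalence class $[v]$ with $m$ vertices; these are pairwise adjacent and share a common link, so the construction recalled in the introduction yields an injection $\SL_m(\mathbb{Z}) \hookrightarrow \outg$ coming from the transvections $[\r_{ij}]$ with $v_i, v_j \in [v]$. Precomposing with the standard block inclusion $\SL_k(\mathbb{Z}) \hookrightarrow \SL_m(\mathbb{Z})$, $A \mapsto \left(\begin{smallmatrix} A & 0 \\ 0 & I \end{smallmatrix}\right)$, produces a copy of $\SL_k(\mathbb{Z})$ in $\outg$. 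This step is routine and holds for every $k$.

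Next, for the ``only if'' direction I would argue by contraposition, exploiting the fact that $\SL_k(\mathbb{Z})$ is an irreducible lattice in $G = \SL_k(\mathbb{R})$, a simple real Lie group with finite centre, no compact factors, and $\rrank G = k-1$. Assume $k \geq 3$, so that $\rrank G \geq 2$, and suppose $\outg$ contains a copy of $\SL_k(\mathbb{Z})$. The inclusion $f\co \SL_k(\mathbb{Z}) \to \outg$ then has infinite image. Were $\rrank G = k-1 \geq \dsl(\outg)$, Theorem~\ref{t:lr} would force $f$ to have finite image; hence $k - 1 < \dsl(\outg)$, and as both sides are integers, $k \leq \dsl(\outg)$, as required.

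The hard part is the low-rank range, where higher-rank superrigidity gives nothing. The case $k=1$ is vacuous, since $\SL_1(\mathbb{Z})$ is trivial and $\dsl(\outg) \geq 1$ always. The genuine obstacle is $k=2$: here $G = \SL_2(\mathbb{R})$ has real rank $1$, the group $\SL_2(\mathbb{Z})$ is virtually free, and Theorem~\ref{t:lr} is silent. One must read the statement with care at this boundary — for a discrete graph $A_\G = F_n$ one has $\out(F_2) \cong \gl_2(\mathbb{Z}) \supset \SL_2(\mathbb{Z})$ while $\dsl(\out(F_2)) = 1$ — so the substantive content of the corollary, and the regime in which the argument above is decisive, is $k \geq 3$, where it identifies $\dsl(\outg)$ as the exact size of the largest embedded special linear group.
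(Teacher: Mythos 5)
Your proposal is correct and is precisely the intended argument: the paper states this corollary without proof immediately after Theorem~\ref{t:lr}, and the two ingredients are exactly the ones you use --- the explicit copy of $\SL_m(\mathbb{Z})$, $m=\dsl(\outg)$, coming from the transvections on a largest abelian equivalence class, and Theorem~\ref{t:lr} applied to $\Lambda=\SL_k(\mathbb{Z})$ viewed as an irreducible lattice in $\SL_k(\mathbb{R})$ of real rank $k-1$. Your caveat at the low-rank boundary is also well taken and is in fact a correction to the statement as written: for the discrete graph on two vertices one has $\outg=\out(F_2)\cong\gl_2(\mathbb{Z})\supset\SL_2(\mathbb{Z})$ while $\dsl(\out(F_2))=1$, so the \emph{only if} direction genuinely fails at $k=2$ and the corollary must be read as a statement about $k\geq 3$.
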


We can't expect to have such a nice description of when $\out(A_\G)$ contains a copy of $\SL_2(\mathbb{Z})$. As $\SL_2(\mathbb{Z})$ is virtually free, it is easier to embed in other groups than its higher-rank cousins.

\begin{qu}
What properties does $\G$ require in order for $\SL_2(\mathbb{Z}) \leq \out(A_\G)?$ 
\end{qu}

Having $d_{SL}(\G) \geq 2$ is a sufficient condition, but as $\out(F_2) \cong \gl_2(\mathbb{Z})$ it is certainly not necessary.

\bibliography{RAAGreferences}{}
\bibliographystyle{plain}

\end{document}